\documentclass[a4paper, 12pt]{amsart}
\usepackage[english]{babel}
\usepackage{amsmath}
\usepackage{amssymb}
\usepackage{amscd}
\usepackage{amsthm}
\usepackage{euscript}
\usepackage[colorlinks=true, allcolors=blue]{hyperref}

\usepackage[letterpaper,top=2cm,bottom=2cm,left=3cm,right=3cm,marginparwidth=1.75cm]{geometry}
\usepackage{graphicx}
\newtheorem{proposition}{Proposition}
\newtheorem{lemma}{Lemma}
\newtheorem{theorem}{Theorem}

\newtheorem{corollary}{Corollary}
\newtheorem{conjecture}{Conjecture}
\theoremstyle{definition}
\newtheorem{definition}{Definition}
\newtheorem{example}{Example}
\theoremstyle{remark}
\newtheorem {remark}{Remark}

\DeclareMathOperator{\Aut}{Aut}

\DeclareMathOperator{\supp}{Supp}

\def\Ker{{\rm Ker}\,}

\def\rk{{\mathrm{rk}}}

\def\ZZ{{\mathbb Z}}

\def\BK{{\mathbb K}}

\def\BZ{{\mathbb Z}}

\def\BN{{\mathbb N}}

\sloppy
\textwidth=16.3cm
\oddsidemargin=0cm
\topmargin=0cm
\headheight=0cm
\headsep=1cm
\textheight=23.5cm
\evensidemargin=0cm

\title{On automorphism group of toral varieties}
\author{Anton Shafarevich  and Anton Trushin}

\begin{document}
\maketitle

\begin{abstract}
Let $\BK$ be an uncountable algebraically closed field of characteristic zero. An affine algebraic variety $X$ over $\BK$ is toral if it is isomorphic to a closed subvariety of a torus $(\BK^*)^d$. We study the group $\Aut(X)$ of regular automorphisms of a toral variety $X$. We prove that if $T$ is a maximal torus in $\Aut(X)$, then $X$ is a direct product $Y\times T$, where $Y$ is a toral variety with a trivial maximal torus in the automorphism group. We show that knowing $\Aut (Y)$, one can compute $\Aut(X)$. In the case when the rank of the group $\BK[Y]^*/\BK^*$ is $\dim Y + 1$, the group $\Aut(Y)$ is described explicitly. 
\end{abstract}

\section{Introduction}

Let $\BK$ be an algebraically closed field of characteristic zero. The set of solutions of a system of polynomial equations in affine space has been studied for a very long time. But some interesting properties may appear when we consider the set of solutions inside a torus $(\BK^*)^d$. In other words, we consider only solutions with non-zero coordinates. One of the examples of this approach is the Bernstein-Kushnirenko Theorem; see \cite{Ber, Kush}.

\smallskip Vladimir Popov in \cite{Popov1} proposed the following definition.

\begin{definition}
An irreducible affine algebraic variety $X$ is called \emph{toral} if it is isomorphic to a closed subvariety of a torus $(\BK^*)^d$.
\end{definition}

Some authors also use the term a "very affine variety"; see \cite{Tev, Huh}. It can be seen that $X$ is toral if and only if the algebra of regular functions on $X$ is generated by invertible functions; see \cite[Lemma 1.14]{Popov1}. One of the reasons why toral varieties are interesting is that they are rigid varieties; see \cite[Lemma 1.14]{Popov1}.

\begin{definition}
An affine algebraic variety $X$ is called \emph{rigid} if there is no non-trivial action of the additive group $(\BK, +)$ on $X.$

\end{definition}

Despite the fact that the automorphism group of an affine algebraic variety has a complicated structure, sometimes it is possible to describe it for rigid varieties. It was proven in \cite{AGRV} that the group of regular automorphisms $\Aut(X)$ of a rigid variety $X$ contains a unique maximal torus $T$. One can find examples of computation of $\Aut(X)$ for rigid varieties in \cite{AGRV, Perep1, PZ}.

In this paper, we study the automorphism group $\Aut(X)$ of a toral variety $X$. We denote by $\BK[X]$ the algebra of regular functions on $X$ and by $\BK[X]^*$ the multiplicative group of invertible regular functions on $X$. Let $E(X)$ be the quotient group $\BK[X]^*/\BK^*.$ By \cite{Ro}, the group $E(X)$ is a free finitely generated abelian group. For a toral variety $X$ the rank of $E(X)$ is not less than $\dim X.$

Any automorphism of $X$ induces an automorphism of $E(X).$ So we obtain a homomorphism from $\Aut(X)$ to $\Aut(E(X)).$ We denote by $H(X)$ the kernel of this homomorphism. Note that $H(X)$ consists of automorphisms that multiply invertible functions by constants. 

Suppose that $X$ is a closed subvariety of a torus $T_d = (\BK^*)^d$. In Proposition \ref{Prop1} we show that the group $H(X)$ is naturally isomorphic to a subgroup in $T_d$ which consists of elements that preserve $X$ under the action by multiplication. In Proposition \ref{Prop3} we propose a way to compute the subgroup $H(X)$.

In Theorem \ref{theorem1} we show that if $T$ is a maximal torus in $\Aut(X)$ then $X$ is isomorphic to a direct product $T \times Y$ where $Y$ is a toral variety with a discrete automorphism group. Here and below we assume that the field $\BK$ is uncountable. Theorem \ref{theorem2} gives a way to find $\Aut(X)$ knowing $\Aut(Y).$ If the rank of $E(Y)$ is $\dim Y + 1$ it is possible to describe $\Aut(Y)$ (Theorem \ref{Gaff}). 

Also, we consider the case when the rank of $E(X)$ is equal to $\dim X.$ By Proposition \ref{theorem2} in this case $X$ is a torus. Moreover, it is the only case when $\Aut(X)$ acts on $X$ with an open orbit.  

We use the following notation. If $\varphi$ is a regular automorphism of an affine variety $X$ then by $\varphi^*$ we mean an automorphism of $\BK[X]$ dual to 
$\varphi$. If $A$ is a group and $B$ is a normal subgroup in $A$ then by $[a]$ we denote the image of an element $a\in A$ in the quotient group $A/B.$ If $X$ is a closed subvariety of an affine variety $Z$ then by $I(X)$ we mean the ideal of regular functions on $Z$ which are equal to zero on $X$.

\section{General facts on toral varieties}

Here we prove some initial properties of toral varieties and propose a way to compute the group $H(X)$ for a toral variety $X$

Let $T_r$ be a torus of dimension $r$. We recall that the group $\Aut (T_r)$ is isomorphic to $T_r  \rtimes \mathrm{GL}_r(\ZZ)$; see\cite[Example 2.3]{AGRV}. Here the left factor $T_r$ acts on itself by multiplications and a matrix $(a_{ij}) \in \mathrm{GL}_r(\ZZ)$ defines an automorphism of $T_r$ which is given by the formula

$$t_i \to t_1^{a_{i1}}\ldots t_r^{a_{ir}},$$
where $t_1,\ldots, t_r$ are coordinate functions on $T_r.$

Now let $X$ be a toral variety and $r$ is the rank of $E(X)$. One can choose invertible functions $f_1, \ldots, f_r \in \BK[X]^{*}$ such that $[f_1], \ldots, [f_r]$ form a basis of the group $E(X)$. Then $f_1, \ldots, f_r$ generate the algebra $\BK[X]$ and define a closed embedding of $\rho: X \hookrightarrow T_r.$ Note that if we choose another $g_1,\ldots, g_r \in \BK[X]^{*}$ such that $[g_1],\ldots, [g_r]$ form a basis of $E(X)$ then the respective embedding $\rho_g: X \hookrightarrow T_r$ differs from $\rho$ by an automorphism of $T_r$. Indeed, we have 
$$g_i = \lambda_if_1^{a_{i1}}\ldots f_r^{a_{ir}},\ i = 1\ldots r$$
for some $\lambda_i \in \BK^*$ and $(a_{ij}) \in \mathrm{GL}_r (\mathbb{Z})$. If we consider an automorphism $\tau: T_r \to T_r$ which is given by the formulas 
$$\tau(t_i) = \lambda_it_1^{a_{i1}}\ldots t_r^{a_{ir}}$$
then $\rho_g = \tau \circ \rho$. 

\begin{definition} We will call the embedding $\rho$ described above \emph{canonical}.

\end{definition}

Note that if $\rho: X \hookrightarrow T_r$ is a canonical embedding, then $\mathbb{K}[X]^*  \simeq \mathbb{K}[T_r]^*$ and $E(X) \simeq E(T_r)$. 
We denote by $\Aut_{X} (T_r)$ the subgroup of $\Aut(T_r)$ which consists of automorphisms of $T_r$ that preserve $X$. There is a natural homomorphism $\Aut_X(T_r) \to \Aut (X)$ which sends an automorphism $\varphi \in \Aut_X(T_r)$ to its restriction $\varphi|_{X}$.

\begin{proposition}\label{Prop1}

Let $X$ be a toral variety and $\rho: X \hookrightarrow T_r$ be a canonical embedding. Then
\begin{enumerate}
\item the homomorphism 
$$\Aut_X(T_r) \to \Aut (X), \ \varphi \to \varphi|_X$$
is an isomorphism; 
\item the subgroup $H(X)$ is the image of the subgroup $\Aut_X(T_r) \cap T_r$ with respect to this isomorphism.
\end{enumerate}
\end{proposition}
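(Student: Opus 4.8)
The plan is to exhibit an explicit two-sided inverse to the restriction map, using the identification $\BK[X] = \BK[T_r]/I(X)$ that comes from the canonical embedding. First I would recall the structure $\Aut(T_r) \cong T_r \rtimes \GL_r(\ZZ)$ and note that, dually, an automorphism of $T_r$ is the same thing as an algebra automorphism of $\BK[T_r] = \BK[t_1^{\pm1},\dots,t_r^{\pm1}]$, which in turn is determined by where it sends the invertible generators $t_1,\dots,t_r$; concretely $\psi^*(t_i) = \lambda_i t_1^{a_{i1}}\cdots t_r^{a_{ir}}$ for $\lambda_i \in \BK^*$ and $(a_{ij}) \in \GL_r(\ZZ)$. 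The key point is that because $\rho$ is \emph{canonical}, the restriction $\psi^*(t_i)|_X = \lambda_i f_1^{a_{i1}}\cdots f_r^{a_{ir}}$ lies in $\BK[X]^*$, and the classes $[f_i]$ form a basis of $E(X)$; so every element of $\BK[X]^*$ has exactly this form for a unique $(\lambda_i)$ and a unique matrix row in $\ZZ^r$.

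For surjectivity and to build the inverse, take any $\varphi \in \Aut(X)$. Then $\varphi^*(f_i) \in \BK[X]^*$, so $\varphi^*(f_i) = \lambda_i f_1^{a_{i1}}\cdots f_r^{a_{ir}}$; moreover $\varphi^*$ induces an automorphism of $E(X)$, forcing $(a_{ij}) \in \GL_r(\ZZ)$. Define $\widetilde\varphi \in \Aut(T_r)$ by $\widetilde\varphi^*(t_i) = \lambda_i t_1^{a_{i1}}\cdots t_r^{a_{ir}}$. One then checks that $\widetilde\varphi^*$ carries $I(X)$ into itself: a relation $\sum c_\alpha t^\alpha \in I(X)$ means $\sum c_\alpha f^\alpha = 0$ in $\BK[X]$, and applying the algebra automorphism $\varphi^*$ shows $\sum c_\alpha \widetilde\varphi^*(t)^\alpha$ vanishes on $X$; the same applied to $\varphi^{-1}$ gives the reverse inclusion, so $\widetilde\varphi^*(I(X)) = I(X)$ and hence $\widetilde\varphi \in \Aut_X(T_r)$. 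By construction $\widetilde\varphi|_X = \varphi$ on the generators $f_i$, hence everywhere, which gives surjectivity; and the same construction applied to an element already of the form $\psi|_X$ reproduces $\psi$, giving injectivity (alternatively: if $\psi|_X = \mathrm{id}$ then $\psi^*(t_i) \equiv t_i \bmod I(X)$, and since the $t_i$ are a basis of $E(T_r) \cong E(X)$ this forces $\psi^*(t_i) = t_i$, i.e. $\psi = \mathrm{id}$). This proves part (1). I expect the main subtlety here is the unique-form statement for elements of $\BK[X]^*$ — it rests squarely on $\rho$ being canonical, i.e. on $\BK[X]^* \cong \BK[T_r]^*$ and $E(X) \cong E(T_r)$ — so I would state that carefully rather than treat it as automatic.

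For part (2), I would trace through the isomorphism of part (1). By definition $H(X)$ is the kernel of $\Aut(X) \to \Aut(E(X))$, i.e. the set of $\varphi$ with $\varphi^*(f_i) = \lambda_i f_i$ for some $\lambda_i \in \BK^*$ (the matrix $(a_{ij})$ is the identity). Under the inverse constructed above these correspond exactly to the $\widetilde\varphi \in \Aut_X(T_r)$ with $\widetilde\varphi^*(t_i) = \lambda_i t_i$, which are precisely the automorphisms of $T_r$ given by multiplication by the point $(\lambda_1,\dots,\lambda_r) \in T_r$; that is, the elements of $\Aut(T_r) \cap T_r$ that happen to preserve $X$, namely $\Aut_X(T_r) \cap T_r$. (It is harmless to note that an element of $T_r$, viewed in $\Aut(T_r)$, lies in $\Aut_X(T_r)$ exactly when multiplication by it maps $X$ to $X$.) Hence the isomorphism of (1) restricts to an isomorphism $\Aut_X(T_r) \cap T_r \xrightarrow{\ \sim\ } H(X)$, which is the assertion. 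No serious obstacle remains at this stage; it is essentially a matter of matching up the two descriptions coordinate-wise.
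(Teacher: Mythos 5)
Your proposal is correct and follows essentially the same route as the paper: lift an automorphism of $X$ to $T_r$ by expressing $\varphi^*(f_i)$ in the basis $[f_1],\dots,[f_r]$ of $E(X)$, check the kernel of restriction is trivial, and identify $H(X)$ with the diagonal part $\Aut_X(T_r)\cap T_r$. Your verification that $\widetilde\varphi^*$ preserves $I(X)$ is in fact spelled out more carefully than in the paper, and your algebraic injectivity argument ($\lambda_i f_i=f_i$ forces $\lambda_i=1$) is a harmless variant of the paper's appeal to the freeness of the $T_r$-action on itself.
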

\begin{proof}

We denote by $t_1, \ldots, t_r$ coordinate functions on $T_r$ and by $f_1, \ldots, f_r$ the respective invertible regular functions on $X$. Then $[f_1], \ldots, [f_r]$ is a basis of $E(X)$. 

Firstly, we will prove that the homomorphism 

$$\Aut_X(T_r) \to \Aut (X), \ \varphi \to \varphi|_X$$
is surjective.  Let $\overline{\varphi}$ be an automorphism of $X$. Then $\overline{\varphi}$ defines an automorphism of the lattice $E(X)$. Therefore,
$$\overline{\varphi}(f_i) = \lambda_if_1^{a_{i1}}\ldots f_r^{a_{ir}}, \ i = 1,\ldots, r,$$
where $\lambda_i \in \BK^*$ and $(a_{ij}) \in \mathrm{GL}_r(\mathbb{Z})$. We define an automorphism $\varphi$ of $T_r$ by the formulas
$$\varphi(t_i) = \lambda_it_1^{a_{i1}}\ldots t_r^{a_{i_r}}, \ i = 1,\ldots r.$$
Then $\varphi$ preserves $X$ and $\varphi|_X = \overline{\varphi}$.

Now suppose that the image of an automorphism $\psi \in \mathrm{Aut}_X (T_r)$ is a trivial automorphism of $X$. Then $\psi|_X$ defines a trivial automorphism of the lattice $E(X)$. Hence, $\psi$ defines a trivial automorphism of the lattice $E(T_r)$. So $\psi$ has the form
$$\psi(t_i) = \beta_it_i$$
for some $\beta \in \BK^*$. It means that $\psi \in T_r$.  But $T_r$ acts on itself freely. Since $\psi$ preserve all points of $X$ then $\psi$ is a trivial automorphism of $T_r$. So the map
$$\Aut_X(T_r) \to \Aut (X)$$ 
is injective and therefore it is an isomorphism.

It remains to prove the last property. If $\delta \in \Aut_X(T_r) \cap T_r$ then $\delta|_X$ defines a trivial automorphism of $E(X)$. Hence $\delta|_X \in H(X)$. 

Conversely, suppose that $\delta|_X \in H(X)$. Then $\delta$ is given by the formulas
$$\delta(t_i) = \gamma_it_i, \ i = 1,\ldots, r,$$
for some $\gamma_i \in \BK^*$. Therefore $\delta \in \Aut_X(T_r)\cap T_r$. 
\end{proof}

\begin{corollary}\label{CorAut}

Let $X$ be a toral variety and $r = \mathrm{rank}\ E(X)$. Then the group $\Aut(X)$ is isomorphic to a subgroup in $T_r \rtimes \mathrm{GL_r}(\mathbb{Z})$.
\end{corollary}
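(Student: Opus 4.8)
The plan is to deduce this directly from Proposition \ref{Prop1} together with the recalled structure of $\Aut(T_r)$. Fix a canonical embedding $\rho\colon X\hookrightarrow T_r$, which exists precisely because $r=\rank E(X)$: choosing invertible functions $f_1,\dots,f_r\in\BK[X]^*$ whose classes form a basis of $E(X)$ gives generators of $\BK[X]$ and hence the desired closed embedding into $T_r$.

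First I would invoke part (1) of Proposition \ref{Prop1}, which says that restriction $\varphi\mapsto\varphi|_X$ is a group isomorphism $\Aut_X(T_r)\xrightarrow{\ \sim\ }\Aut(X)$. Since $\Aut_X(T_r)$ is by definition the subgroup of $\Aut(T_r)$ consisting of automorphisms of $T_r$ preserving $X$, this already realizes $\Aut(X)$ as (a group isomorphic to) a subgroup of $\Aut(T_r)$.

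Second, I would recall from \cite[Example 2.3]{AGRV} the isomorphism $\Aut(T_r)\cong T_r\rtimes\GL_r(\ZZ)$ stated at the beginning of this section. Composing the two identifications exhibits $\Aut(X)$ as a subgroup of $T_r\rtimes\GL_r(\ZZ)$, which is the claim.

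There is essentially no obstacle here: all the work has been done in Proposition \ref{Prop1}, and the only point to be slightly careful about is that the statement is independent of the choice of basis of $E(X)$ used to build $\rho$ — but this was already observed above, since a different choice changes $\rho$ by an element of $\Aut(T_r)$, hence conjugates $\Aut_X(T_r)$ inside $\Aut(T_r)$ and does not affect the conclusion.
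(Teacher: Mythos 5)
Your argument is correct and is exactly the intended one: the paper states this as an immediate corollary of Proposition \ref{Prop1}, identifying $\Aut(X)$ with the subgroup $\Aut_X(T_r)$ of $\Aut(T_r)\simeq T_r\rtimes\GL_r(\ZZ)$ via restriction along a canonical embedding. Your added remark on independence of the choice of basis is a sensible (if not strictly necessary) precaution, since the statement only asserts an isomorphism onto some subgroup.
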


\begin{remark}
It follows from Proposition \ref{Prop1} that a toral variety $X$ can be embedded in a torus $T_r$ in such a way that any automorphism $X$ can be uniquely extended to an automorphism of $T_r.$ If $X$ is a subvariety of $Z$ it is always natural to ask whether an automorphism of $X$ can be extended to an automorphism of $Z$. Some results concerning this problem can be found in~\cite{Kalim1, Kalim2}.
\end{remark}

\begin{example}\label{ex0}

Let $X$ be a toral variety and $\mathrm{rank}\ E(X) = r$. Then there is a canonical embedding $\rho: X \hookrightarrow T_r$ of $X$ into a torus $T_r$ of dimension $r$. But in some cases it is also possible to embed $X$ into a torus of lower dimension.

Consider 
$$Y = \{ (x, y ) \in (\BK^*)^2 |yx(x-1)(x-2)\ldots (x-k) = 1 \}.$$
It is a closed subvariety of a torus $T_2 = (\BK^*)^2$ so $Y$ is a toral variety. We see that $x, (x-1), \ldots, (x-k)$ are invertible functions on $Y$. We will show that $[x], [x-1], \ldots, [x-k]$ are linearly independent in $E(Y)$. It implies that $\mathrm{rk}\ E(Y) \geq k+1$.

Indeed, otherwise there are $b_0, \ldots, b_k \in \BZ$ and $\lambda \in \BK^*$ such that 
\begin{equation}\label{eq0} x^{b_0}(x-1)^{b_1}\ldots(x-k)^{b_k} = \lambda. \end{equation}

But the polynomial $x^{b_0}(x-1)^{b_1}\ldots(x-k)^{b_k} - \lambda$ is not divisible by  $yx(x-1)(x-2)\ldots (x-k) - 1$ in $\BK[x^{\pm 1}, y^{\pm 1}]$. So Equation \ref{eq0} cannot be hold on $Y$.

\end{example}

\begin{example}

It is also not true that every embedding of a toral variety $X$ with $\mathrm{rank}\ E(X) = r$ into a torus $T_r$ is canonical. 

The embedding $X \hookrightarrow T_r$ is canonical if $[t_1|_X], \ldots, [t_r|_X]$ is a basis of $E(X)$. If we choose $Y \subseteq T_2$ as in Example \ref{ex0} above then the embedding $Y \hookrightarrow T_2 \times T_{r-2} = T_r$, where $ z \to (z, p)$ for some fixed point $p \in T_{r-2}$, is not a canonical embedding. Here the restrictions $t_3|_Y, \dots, t_r|_Y$ are constants so $[t_3|_Y] = \ldots = [t_r|_Y]$ is a neutral element in $E(Y)$.
 
\end{example}

Now let $X$ be a  closed irreducible subvariety in $T_r$ and the embedding $X \hookrightarrow T_r$ be canonical. By Proposition \ref{Prop1} we can identify the group $H(X)$ with the subgroup in $T_r$ which preserves $X$. We will describe the subgroup $H(X)$ as a subgroup in $T_r$.  Let $M \simeq \ZZ^r$ be the lattice of characters of $T_r$. For $m = (m_1, \ldots, m_r) \in M$ by $\chi^m$ we mean the character $t \to t_1^{m_1}\ldots t_r^{m_r}.$ Then each function in $\BK[t_1^{\pm 1}, \ldots, t_r^{\pm 1}]$ is a linear combination of characters. For a function $f = \sum_i \alpha_{m_i} \chi^{m_i} \in \BK[t_1^{\pm 1}, \ldots, t_r^{\pm 1}]$ by \emph{support of $f$} we mean the subset 

$$\supp f = \{m_i \in M |\ \alpha_{m_i} \neq 0 \} \subseteq M.$$

Let $I(X)$ be the ideal of functions in $\BK[t_1^{\pm 1}, \ldots, t_r^{\pm 1}]$ which are equal to zero on $X$. We say that $f \in I(X)$ is \emph{minimal} if there is no non-zero $g \in I(X)$ such that $\supp g \subsetneq \supp f.$ 
\begin{lemma}
Minimal polynomials generate $I(X)$ as a vector space.  

\end{lemma}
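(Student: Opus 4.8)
The inclusion ``$\BK$-span of the minimal polynomials $\subseteq I(X)$'' is immediate, since every minimal polynomial lies in $I(X)$ and $I(X)$ is a $\BK$-vector space. For the reverse inclusion, the plan is to run a strong induction on the cardinality $n = |\supp f|$, proving that every nonzero $f \in I(X)$ is a $\BK$-linear combination of minimal polynomials from $I(X)$ (with the zero polynomial counted as the empty combination).

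In the inductive step I would argue as follows. If $f$ is itself minimal, there is nothing to do. Otherwise, by the definition of minimality there is a nonzero $g \in I(X)$ with $\supp g \subsetneq \supp f$. Choose an exponent $m \in \supp g$ and let $\alpha$, $\beta$ be the coefficients of $\chi^m$ in $f$ and $g$ respectively; then $\beta \neq 0$, and also $\alpha \neq 0$ since $\supp g \subseteq \supp f$. Set $h = f - (\alpha/\beta)g$. Then $h \in I(X)$, and $\supp h \subseteq \supp f \cup \supp g = \supp f$ because $\supp g \subseteq \supp f$, while the coefficient of $\chi^m$ in $h$ is $\alpha - (\alpha/\beta)\beta = 0$; hence $\supp h \subseteq \supp f \setminus \{m\}$ and $|\supp h| < n$. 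Likewise $(\alpha/\beta)g \in I(X)$ has support $\supp g$, of size $< n$. Applying the induction hypothesis to $h$ and to $(\alpha/\beta)g$ expresses each as a combination of minimal polynomials, and therefore so is $f = h + (\alpha/\beta)g$.

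To see that this descent is well-founded — equivalently, to provide a base case — I would note that a nonzero Laurent polynomial with a one-element support is a nonzero scalar multiple of a monomial $\chi^m$, and $\chi^m$ is a unit on $T_r$, hence nowhere zero on the nonempty subvariety $X$; so it does not lie in $I(X)$. Consequently the least possible value of $|\supp f|$ for a nonzero $f \in I(X)$ is at least $2$, and whenever $f \in I(X)$ attains the minimal support size it is automatically a minimal polynomial, which grounds the induction. I do not expect a genuine obstacle here: the only points needing care are the bookkeeping of supports under the subtraction $f - (\alpha/\beta)g$ and the elementary observation that monomials never vanish on $X$, which is exactly what forces the recursion to terminate at minimal polynomials.
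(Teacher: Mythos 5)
Your argument is correct and is essentially the paper's proof: induct on $|\supp f|$, and if $f$ is not minimal, subtract a suitable scalar multiple of a witness $g$ to strictly shrink the support, then apply the induction hypothesis to both pieces. You merely make explicit the choice of scalar (cancelling the coefficient of a chosen $\chi^m$) and the well-foundedness of the descent, which the paper leaves implicit.
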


\begin{proof}
If $f\in I(X)$ is not minimal then there is a $g\in I(X)$ with $\supp g \subsetneq \supp f$. One can choose a constant $\alpha$ such that $\supp (f - \alpha g) \subsetneq \supp f.$ Applying induction by cardinality of $\supp f$ we see that $g$ and $f - \alpha g$ can be represented as a sums of minimal polynomials. Then $f$ is also a sum of minimal polynomials.
\end{proof}

\begin{definition}
    
\label{def1}

We denote by $M(X)$ the subgroup of $M$ which is generated by Minkowski sums $\supp f + (-\supp f)$ for all minimal $f \in I(X)$.

\end{definition}

\begin{proposition}\label{Prop3}

The subgroup $H(X) \subseteq T_r$  is given by equations $\chi^m(t) = 1$ for all $m\in M(X).$

\end{proposition}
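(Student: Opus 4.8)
The plan is to use Proposition~\ref{Prop1} to identify $H(X)$ with the subgroup $G=\{t\in T_r:\ tX=X\}$ of elements of $T_r$ preserving $X$ under translation, and then to describe $G$ by analysing how translations act on the ideal $I(X)$. For $t\in T_r$ write $L_t\colon T_r\to T_r$, $x\mapsto tx$, so that the comorphism $L_t^*$ is diagonal in the character basis: $L_t^*\chi^m=\chi^m(t)\,\chi^m$. In particular $L_t^*$ preserves the support of every Laurent polynomial. The first step is to reduce the condition ``$t$ preserves $X$'' to ``$L_t^*(I(X))\subseteq I(X)$''. Indeed, $tX$ is closed and irreducible of the same dimension as $X$ because $L_t$ is an automorphism of $T_r$, so $tX\subseteq X$ already forces $tX=X$; rewriting $tX\subseteq X$ in terms of vanishing ideals gives the stated equivalence, and we never need the reverse inclusion.

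The heart of the argument is a local statement about a single minimal element. I claim that for minimal $f\in I(X)$ with $S=\supp f$ one has $L_t^*f\in I(X)$ if and only if $\chi^m(t)=\chi^{m'}(t)$ for all $m,m'\in S$. The ``if'' direction is immediate: then $L_t^*f$ is a scalar multiple of $f$, hence lies in $I(X)$. For ``only if'', fix $m_0\in S$ and set $h=\chi^{m_0}(t)\,f-L_t^*f$; then $h\in I(X)$, and comparing coefficients of $\chi^{m_0}$ shows $\supp h\subseteq S\setminus\{m_0\}\subsetneq S$, so minimality of $f$ forces $h=0$. Reading off the remaining coefficients gives $\chi^m(t)=\chi^{m_0}(t)$ for every $m\in S$, whence $\chi^{m-m'}(t)=1$ for all $m,m'\in S$. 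This is the step where minimality is genuinely used, and I expect it to be the main (though not technically hard) point of the proof.

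It remains to assemble the pieces. Since minimal polynomials span $I(X)$ as a vector space by the preceding lemma, $L_t^*(I(X))\subseteq I(X)$ holds if and only if $L_t^*f\in I(X)$ for every minimal $f\in I(X)$; by the local statement this is equivalent to $\chi^n(t)=1$ for every $n$ of the form $m-m'$ with $m,m'\in\supp f$, that is, for every $n\in\supp f+(-\supp f)$, ranging over all minimal $f$. As $\{n\in M:\chi^n(t)=1\}$ is a subgroup of $M$, imposing these equations is the same as imposing $\chi^n(t)=1$ for all $n$ in the subgroup they generate, which is exactly $M(X)$ by Definition~\ref{def1}. Combining this with the identification $H(X)\cong G$ from Proposition~\ref{Prop1} yields the proposition.

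One point to be careful about when writing the details is the passage between ``$t$ preserves $X$'' and the one-sided inclusion $L_t^*(I(X))\subseteq I(X)$: it relies on irreducibility of $X$ and on the fact that an automorphism of $T_r$ sends $X$ to a variety of the same dimension, so that containment upgrades to equality. Everything else is a routine translation of the support/minimality bookkeeping already set up before the statement.
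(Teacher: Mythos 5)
Your proof is correct and follows essentially the same route as the paper's: the key step in both is to form the difference $\chi^{m_0}(t)f-L_t^*f$ for a minimal $f$ and invoke minimality to force all characters in $\supp f$ to agree on $t$, then use the lemma that minimal polynomials span $I(X)$ to assemble the equivalence. Your extra remarks (that $tX\subseteq X$ upgrades to $tX=X$, and that it suffices to impose the equations on generators of $M(X)$) are points the paper leaves implicit, but they do not change the argument.
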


\begin{proof}

Let $h\in H(X)$ and $f = \sum_i \alpha_{m_i}\chi^{m_i}$ be a minimal polynomial in $I(X).$ Then $h\circ f = \sum_i \alpha_{m_i} \chi^{m_i}(h)\chi^{m_i}.$ The ideal $I(X)$ is invariant under the action of $H(X)$. So $h\circ f \in I(X)$. Suppose that there are $a,b \in M(X)$ such that $\alpha_a, \alpha_b \neq 0$ and $\chi^{m_a}(h) \neq \chi^{m_b}(h).$ Then $g = \chi^{m_a}(h)f - h\circ f$ is a non-zero function in $I(X)$ and $\supp g \subsetneq \supp f.$ But $f$ is minimal. So $\chi^{m_a} (h) = \chi^{m_b} (h)$. Therefore, $\chi^{m_a - m_b} (h) = 1$ and this implies that $\chi^{m}(h) = 1$ for all $m \in M(X).$ 

Now consider an element $ t\in T_r$ such that $\chi^m(t) = 1,\ \forall \ m\in M(X).$ Then every minimal polynomial in $I(X)$ is a semi-invariant with respect to $t$. But $I(X)$ is a linear span of minimal polynomials. So $I(X)$ is invariant under the action of $t$. Therefore, $t \in  H(X).$

\end{proof}

At the end of this section, we note that toral varieties over uncountable fields satisfy the following conjecture formulated by Alexander Perepechko and Mikhail Zaidenberg.

\begin{conjecture}[Conjecture 1.0.1 in \cite{PZ}]
If $Y$ is a rigid affine algebraic variety over $\BK$, then the connected component
$\Aut^0(Y)$ is an algebraic torus of the rank not greater than $\dim Y$. 
\end{conjecture}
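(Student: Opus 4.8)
The plan is to prove the two assertions of the conjecture separately: that $\Aut^0(Y)$ contains no unipotent part and is therefore a torus, and that its rank is bounded by $\dim Y$. Throughout I regard $\Aut(Y)$ as an ind-group in the sense of Furter--Kraft, so that $\Aut^0(Y)$ is a connected ind-subgroup and the notion of an algebraic subgroup of $\Aut(Y)$ is available. The two assertions rest on two independent inputs: rigidity, which kills one-parameter unipotent subgroups, and faithfulness of torus actions, which bounds ranks.

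First I would dispose of the rank bound, since it needs no deep input. Any algebraic torus $S \subseteq \Aut(Y)$ acts faithfully on the irreducible variety $Y$, because the inclusion $S \hookrightarrow \Aut(Y)$ has trivial kernel. For such an action no positive-dimensional subtorus $S' \subseteq S$ can fix a generic point: the fixed-point locus $Y^{S'}$ is closed, so if it were dense it would equal $Y$, forcing $S'$ to act trivially and contradicting faithfulness. Hence the connected component of a generic stabilizer is trivial, the generic $S$-orbit has dimension $\rank S$, and therefore $\rank S \le \dim Y$. In particular the maximal torus $T$ furnished by \cite{AGRV} has rank at most $\dim Y$.

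Next I would use rigidity to show $\Aut^0(Y)$ is a torus, reducing this to an algebraicity statement. The key elementary observation is that every connected algebraic subgroup $G \subseteq \Aut(Y)$ is a torus: a connected linear algebraic group over a field of characteristic zero that is not a torus contains a one-parameter unipotent subgroup isomorphic to $\BG_a$ (coming either from its unipotent radical or, in the reductive case, from a root subgroup), and such a subgroup would yield a nontrivial action of $\BG_a$ on $Y$, contradicting rigidity. By \cite{AGRV} the tori in $\Aut(Y)$ possess a unique maximal one $T$, which moreover contains every torus of $\Aut(Y)$. Consequently, if $\Aut^0(Y)$ is exhausted by its algebraic subgroups, each of these is a torus contained in $T$, whence $\Aut^0(Y) = T$ is an algebraic torus of rank $\le \dim Y$, as required.

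The \emph{main obstacle}, and the reason the statement is a conjecture rather than a theorem in general, is precisely the algebraicity (equivalently, finite-dimensionality) of $\Aut^0(Y)$: a priori the neutral component of an ind-group need not be exhausted by algebraic subgroups. I would attack this through the Lie algebra $\lie(\Aut(Y))$, consisting of the locally finite derivations of $\BK[Y]$. Each such derivation has a Jordan--Chevalley decomposition whose nilpotent part is a locally nilpotent derivation; rigidity means $\BK[Y]$ admits no nonzero locally nilpotent derivation, so every element of $\lie(\Aut(Y))$ is semisimple. A Lie algebra all of whose elements are semisimple is abelian and, in view of the rank bound above, should be forced to be finite-dimensional and hence to be the Lie algebra of the torus $T$; deducing from this that $\Aut^0(Y)$ itself coincides with $T$ is the delicate ind-group step one must supply. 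For the toral varieties that are the subject of this paper the difficulty evaporates: Corollary \ref{CorAut} embeds $\Aut(X)$ into $T_r \rtimes \GL_r(\ZZ)$, so $\Aut^0(X)$ is automatically contained in the torus $T_r$ and the conjecture follows at once.
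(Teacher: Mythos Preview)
The statement you were asked to prove is recorded in the paper as a \emph{conjecture} (attributed to Perepechko--Zaidenberg); the paper does not prove it and does not claim to. There is therefore no ``paper's own proof'' to compare against. You recognize this yourself when you call the algebraicity of $\Aut^0(Y)$ ``the reason the statement is a conjecture rather than a theorem in general,'' so your write-up is really a discussion of the difficulty rather than a proof, and that is the honest position.

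Two technical comments on the sketch. First, in the Furter--Kraft framework $\lie(\Aut(Y))$ is the full space of vector fields $\mathrm{Der}_\BK(\BK[Y])$, not just the locally finite ones; the Jordan--Chevalley decomposition you invoke is only available for locally finite derivations, so the passage ``each such derivation has a Jordan--Chevalley decomposition whose nilpotent part is a locally nilpotent derivation'' already presupposes what you are trying to prove. This is exactly where the conjecture bites. Second, your rank bound and the observation that connected algebraic subgroups of $\Aut(Y)$ are tori are both correct and standard; they just do not close the gap.

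Your final sentence---that for toral $X$ the embedding $\Aut(X)\hookrightarrow T_r\rtimes\GL_r(\BZ)$ from Corollary~\ref{CorAut} forces $\Aut^0(X)\subseteq T_r$---is precisely the content of the Corollary that immediately follows the conjecture in the paper. The paper's version adds the hypothesis that $\BK$ is uncountable and argues via cardinality: $\Aut(X)/H(X)$ sits inside the countable group $\GL_r(\BZ)$, hence is discrete, so $\Aut^0(X)\subseteq H(X)$, a quasitorus. Your phrasing suppresses this hypothesis; if you want to avoid it you would need to justify that the embedding of Corollary~\ref{CorAut} is a morphism of ind-groups, which the paper does not establish.
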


\begin{corollary}
Suppose that the field $\BK$ is uncountable. Let $X$ be a toral variety over $\BK$. Then $\Aut(X)$ is a discrete extension of an algebraic torus.
\end{corollary}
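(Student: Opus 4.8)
The plan is to deduce the corollary from Corollary~\ref{CorAut} and Proposition~\ref{Prop3}, using the uncountability of $\BK$ only to pin down the neutral component of $\Aut(X)$. Write $r=\rank E(X)$ and fix a canonical embedding $X\hookrightarrow T_r$. By Corollary~\ref{CorAut} I may regard $\Aut(X)$ as a subgroup of $T_r\rtimes\GL_r(\ZZ)$ in which the canonical homomorphism $\Aut(X)\to\Aut(E(X))\cong\GL_r(\ZZ)$ is the projection to the second factor, with kernel $H(X)=\Aut(X)\cap T_r$. Proposition~\ref{Prop3} presents $H(X)$ as the subgroup of $T_r$ cut out by the characters $\chi^m$ with $m\in M(X)$, so $H(X)$ is a diagonalizable algebraic group; write $H(X)=T_0\times F$ with $T_0=H(X)^0$ an algebraic torus and $F$ a finite group. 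Also $\Aut(X)/H(X)$ embeds into $\GL_r(\ZZ)$, hence is countable.

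Next I would show $\Aut^0(X)=T_0$, i.e.\ that the neutral component acts trivially on $E(X)$, and this is where uncountability is essential. Given any morphism $\mu\colon V\to\Aut(X)$ from an irreducible variety $V$, the translates $\mu(v)^*f_i$ ($v\in V$) of a basis element $f_i$ of $E(X)$ span a finite-dimensional subspace $W_i\subseteq\BK[X]$, since $\mu(V)$ is contained in one of the finite-dimensional pieces of the ind-variety $\Aut(X)$. Each $\mu(v)^*f_i$ is invertible, and invertible functions with the same class in $E(X)$ are proportional, so the orbit morphism $V\to\BP(W_i)$, $v\mapsto\BK\cdot\mu(v)^*f_i$, has image mapping injectively into the countable group $E(X)$. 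An irreducible variety over an uncountable field admits no nonconstant morphism to a countable set, so that image is a single point; hence $v\mapsto[\mu(v)^*f_i]$ is constant for every $i$. Running this over the irreducible subvarieties through the identity whose products exhaust $\Aut^0(X)$ shows that $\Aut^0(X)$ fixes every $[f_i]$, hence $\Aut^0(X)\subseteq H(X)=T_0\times F$; projecting onto the finite group $F$ trivializes the connected group $\Aut^0(X)$, so $\Aut^0(X)\subseteq T_0$, and as $T_0$ is connected we get $\Aut^0(X)=T_0$.

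Finally I would assemble the statement. Via the embedding $\Aut(X)\hookrightarrow T_r\rtimes\GL_r(\ZZ)=\bigsqcup_{A\in\GL_r(\ZZ)}T_rA$, the group $\Aut(X)$ meets each clopen sheet $T_rA$ in a (possibly empty) coset of $H(X)$, so every coset of $H(X)$ in $\Aut(X)$ is clopen; hence $H(X)$, and therefore $T_0=H(X)^0$, is open in $\Aut(X)$. Consequently $\Aut(X)/\Aut^0(X)=\Aut(X)/T_0$ is a discrete group, sitting in an extension $1\to F\to\Aut(X)/T_0\to\Aut(X)/H(X)\to 1$ of a countable group by a finite one, hence countable; this exhibits $\Aut(X)$ as a discrete extension of the algebraic torus $\Aut^0(X)=T_0$, proving the corollary. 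I expect the middle paragraph — controlling the neutral component of $\Aut(X)$, i.e.\ ruling out positive-dimensional families of automorphisms that move a class in $E(X)$ — to be the only real obstacle; the rest is formal bookkeeping once Corollary~\ref{CorAut} and Proposition~\ref{Prop3} are available, and it is precisely that step that forces the uncountability hypothesis into the statement.
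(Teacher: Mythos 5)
Your proposal is correct and follows essentially the same route as the paper: identify $\Aut(X)$ inside $T_r\rtimes\GL_r(\ZZ)$, observe that $\Aut(X)/H(X)$ embeds in the countable group $\GL_r(\ZZ)$ so that uncountability of $\BK$ forces $\Aut^0(X)\subseteq H(X)$, and conclude since $H(X)$ is a quasitorus. Your middle paragraph merely spells out in more detail the step the paper states in one line.
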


\begin{proof}
Indeed, if $X$ is a toral variety then the group $\Aut(X)/H(X)$ is isomorphic to  a  subgroup in $\Aut(E(X)) \simeq \mathrm{GL}_r (\BZ)$, where $r$ is the rank of $E(X).$ If $\BK$ is uncountable then $\Aut(X)/H(X)$ is a discrete group. So $\Aut^0(X)$ is contained in $H(X).$ But $H(X)$ is a quasitorus. Therefore, $\Aut^0(X)$  is a torus and the quotient group $\Aut(X)/\Aut^0(X)$ is a discrete group.

\end{proof}

From this point onwards, we always assume that the field $\BK$ is uncountable.

\section{The structure of the automorphism group}

It follows from Corollary \ref{CorAut} that toral varieties are rigid. By \cite[Theorem 2.1]{AGRV}, there is a unique maximal torus in the automorphism group of an irreducible rigid variety.

\begin{theorem}\label{theorem1}
Let $X$ be a toral variety over $\BK$ and $T$ be the maximal torus in $\Aut (X).$ Then $X \simeq Y \times T$ where $Y$ is a toral variety with a discrete automorphism group. 
\end{theorem}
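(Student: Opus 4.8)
The plan is to realise the maximal torus $T$ as an honest subtorus of the ambient torus of a canonical embedding of $X$, decompose that ambient torus compatibly with $T$, and then read off the product structure directly from $T$-invariance.

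First, fix a canonical embedding $\rho\colon X\hookrightarrow T_r$ with $r=\rank E(X)$. By Proposition~\ref{Prop1} (and the discussion following it) the restriction map identifies $\Aut(X)$ with $\Aut_X(T_r)$, and identifies $H(X)$ with the subgroup of $T_r$ consisting of those elements that preserve $X$ under translation. Since $\Aut(X)$ is a discrete extension of an algebraic torus (the Corollary above), $\Aut^0(X)$ is a torus; as every torus in $\Aut(X)$ is connected, maximality gives $T=\Aut^0(X)$, and $\Aut^0(X)\subseteq H(X)$ as in the proof of that Corollary. Hence $T$ is a subtorus of $T_r$, and it acts on $X$ as the restriction of the translation action of $T_r$ on itself.

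Second, decompose $T_r$. Because $T\subseteq T_r$ is a subtorus, its cocharacter lattice is a saturated, hence direct, summand of $\homo(\BG_m,T_r)\cong\ZZ^r$; a choice of complement furnishes a decomposition $T_r\simeq T\times T'$ into subtori. Write a point of $T_r$ as a pair $(s,x)$ with $s\in T$, $x\in T'$; then $T$ acts on $X$ by $t\cdot(s,x)=(ts,x)$. For $(s,x)\in X$, invariance of $X$ under $T$ yields $(t,x)\in X$ for every $t\in T$, so
$$X=T\times Y,\qquad Y:=\{\,x\in T'\mid (1,x)\in X\,\}=X\cap(\{1\}\times T').$$
Thus $Y$ is closed in $T'$, and it is irreducible since $X$ is, so $Y$ is a toral variety and $X\simeq Y\times T$.

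Third, verify that $\Aut(Y)$ is discrete. If it were not, then --- $Y$ being toral --- the Corollary above would give that $S:=\Aut^0(Y)$ is a nontrivial algebraic torus. Via the componentwise embedding $\Aut(Y)\times\Aut(T)\hookrightarrow\Aut(Y\times T)\simeq\Aut(X)$, the subtorus $S\times T$ (with $T$ acting on the $T$-factor by translations and $S$ on $Y$) maps to a torus in $\Aut(X)$ of dimension $\dim S+\dim T>\dim T$ that strictly contains the maximal torus $T$ --- a contradiction. Therefore $Y$ has a discrete automorphism group, which completes the argument.

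The main obstacle I anticipate is the concrete first--second step: pinning down the abstractly defined maximal torus $T$ as a subtorus of $T_r$ acting by translations (relying on Proposition~\ref{Prop1} and the Corollary), and then checking that $T$-invariance of the closed subvariety $X\subseteq T\times T'$ really does force the scheme-theoretic product decomposition $X=T\times Y$ with $Y$ a closed, irreducible --- hence toral --- subvariety of $T'$. The remaining steps are essentially formal.
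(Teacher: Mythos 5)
Your argument is correct, and its skeleton matches the paper's: identify the maximal torus $T$ with the maximal subtorus of the quasitorus $H(X)\subseteq T_r$ acting by translations, split $T_r$ along $T$, and deduce the product structure. Where you diverge is in how the decomposition $X=T\times Y$ is justified. The paper works ideal-theoretically: it puts $M(X)$ in Smith normal form to describe $H(X)$ and its maximal torus explicitly as $T_{r-l}$, observes that minimal polynomials of $I(X)$ are semi-invariant under $H(X)$, hence homogeneous in the last $r-l$ (invertible) variables, and so chooses generators of $I(X)$ independent of those variables. You instead argue point-set-theoretically via the orbit decomposition $(s,x)\in X\Leftrightarrow(1,x)\in X$, which immediately gives $X=T\times Y$ as sets and hence as varieties since everything is reduced in characteristic zero; this is a clean and fully adequate substitute for the homogeneity argument, and arguably more transparent. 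Your final step (a nontrivial $\Aut^0(Y)$ would produce a torus $S\times T$ strictly larger than $T$) also fills in a point the paper passes over rather tersely. One small caveat: the identification $T=\Aut^0(X)$ is not really needed and requires interpreting $\Aut^0$ for a group that is not a priori algebraic; all you actually use is that any torus in $\Aut(X)$ maps trivially to the discrete group $\Aut(E(X))$ and hence lies in $H(X)$, so $T$ is the maximal subtorus of $H(X)$ --- which is exactly the paper's reasoning.
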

\begin{proof}

Let  $r$ by the rank of the group $E(X)$ and $\rho: X \hookrightarrow T_r$ be a canonical embedding. We denote by $M$ the lattice of characters of $T_r$ and by $M(X)$ the sublattice in $M$ which corresponds to $X$. One can choose a basis $e_1, \ldots, e_r \in M$ such that $b_1e_1, \ldots b_le_l$ is a basis of $M(X)$ for some  $b_1, \ldots, b_l \in \BN$ and $l \leq r$. Denote by ${t_1}, \ldots {t_r}$ coordinates on $T_r$ corresponding to $e_1, \ldots, e_r$. 

Then the equations $\chi^m(t) = 1$ for all $m \in M(X)$ define the subgroup $H(X)$ in $T_r$ which consists of elements of the form

$$(\epsilon_1, \ldots, \epsilon_l, t_{l+1},\ldots, t_r),$$
where $\epsilon_1,\ldots, \epsilon_l$ are the roots of unity of degrees $b_1, \ldots, b_l$ respectively and $t_{l+1}, \ldots t_r \in \BK^*.$ Then the maximal torus in $H(X)$ is the torus 
$$T_{r-l} = \{(1, \ldots, 1, t_{l+1}, \ldots, t_r) \in T_r| t_i \in \BK^*\}.$$
The group $\Aut(X)/H(X)$ is a discrete group. So the maximal torus of $\Aut(X)$ coincides with the maximal torus of the quasitorus $H(X)$ which is $T_{r-l}$.  

All minimal polynomials in $I(X)$ are 
semi-invariant with respect to $H(X)$. This means that minimal polynomials in $I(X)$ are homogeneous with respect to each variable ${t}_{l+1},\ldots, {t}_r.$ Since functions ${t}_i$ are invertible one can choose a set of minimal generators of $I(X)$ which do not depend on ${t}_{l+1}, \ldots, {t}_r.$ It implies that $X \simeq Y \times T_{r-l}$ where $Y$ is a subvariety of $T_l = \{(t_1, \ldots, t_l, 1, \ldots, 1) \in T_r| t_i \in \BK^* \}.$

The variety $Y$ is also a toral variety which is given by the ideal $I(X) \cap \BK[t_1^{\pm 1}, \ldots t_l^{\pm 1}].$ Since the unique maximal torus in $\Aut(X)$ is $T_{r-l}$ the maximal torus in $\Aut(Y)$ is trivial.  

\end{proof}

Let $X$ be a toral variety and suppose that $X\simeq T_s \times Y$ where $Y$ is a toral variety with a discrete automorphism group and $T_s$ is the torus $(\BK^*)^s.$ One can see that $\Aut(X)$ contains the following subgroups. 

There is a subgroup which is isomorphic to $\Aut(Y)$. This subgroup acts naturally on $Y$ and acts trivially on $T_s.$ The subgroup $\mathrm{GL_s}(\BZ)$ acts naturally on $T_s$ and trivially on $Y$. Moreover, there is a subgroup which is isomorphic to $(\BK[Y]^*)^s \simeq (E(Y) \times \BK^*)^s.$ This subgroup acts in the following way. If $f_1,\ldots, f_s \in \BK[Y]^*$ then we can define an automorphism of $T_s \times Y$ as follows
$$(t_1,\ldots, t_s, y) \to (f_1(y)t_1,\ldots, f_s(y)t_s, y).$$

The following theorem was proposed to the authors by Sergey Gaifullin.  

\begin{theorem}\label{Gaifullin}
Let $X \simeq T_s \times Y$ be a toral variety, where $Y$ is a toral variety with a discrete automorphism group. Then
$$\Aut (X) \simeq \Aut (Y) \ltimes (\mathrm{GL}_s (\BZ) \ltimes (E(Y) \times \BK^*)^s).$$
\end{theorem}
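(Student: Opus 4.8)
The plan is to use the canonical embedding to pin down the shape of every automorphism and then read off the semidirect product structure. Let $r = \operatorname{rank} E(X)$, so $r = s + \operatorname{rank} E(Y)$, and fix a canonical embedding $X \hookrightarrow T_r$. By Proposition \ref{Prop1}, $\Aut(X)$ is identified with $\Aut_X(T_r)$, the group of torus automorphisms preserving $X$, and this group sits inside $T_r \rtimes \mathrm{GL}_r(\BZ)$ by Corollary \ref{CorAut}. I would choose coordinates $t_1,\dots,t_s$ on the $T_s$ factor and coordinates on the remaining $T_{r-s}$ adapted to a canonical embedding of $Y$; then $X = T_s \times Y$ is cut out inside $T_r$ by the ideal $I(Y)$, viewed as an ideal not involving $t_1,\dots,t_s$.

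The key structural claim is that the projection $T_s \times Y \to Y$ is canonical in the sense that any automorphism of $X$ must respect the decomposition of $E(X)$ into the "$T_s$ part" and the "$Y$ part", \emph{modulo} the $\mathrm{GL}_s(\BZ)$-action and the $E(Y)$-shears. Concretely, an automorphism $\Phi$ of $X$ acts on $E(X) \simeq \BZ^s \oplus E(Y)$ by an integer matrix in block form $\left(\begin{smallmatrix} A & B \\ C & D\end{smallmatrix}\right)$ where $A \in \mathrm{GL}_s(\BZ)$. First I would show $C = 0$ and $D$ lies in the image of $\Aut(Y) \to \Aut(E(Y))$: this is where one uses that $Y$ has a \emph{discrete} automorphism group together with the fact that the $t_i$ are the only invertible functions whose "shadow" is disjoint from $I(Y)$ — more precisely, the sublattice $M(X) = M(Y) \subseteq E(Y)^\vee$ part of the character lattice is intrinsic, so its orthogonal complement (which records the "torus directions") is preserved, forcing $C = 0$. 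The block $B$ is the content of the $(E(Y)\times\BK^*)^s$ subgroup: an automorphism sending $t_i \mapsto \lambda_i t_1^{a_{1i}}\cdots t_s^{a_{si}} \cdot g_i$ with $g_i$ a monomial in the $Y$-coordinates is exactly a shear composed with a $\mathrm{GL}_s(\BZ)$-element; and $\lambda_i \in \BK^*$ together with the monomial $g_i$ is precisely an element of $\BK[Y]^* \simeq \BK^* \times E(Y)$.

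With this normal form established, I would build the semidirect product from the inside out. The shear subgroup $N = (\BK[Y]^*)^s \simeq (E(Y)\times\BK^*)^s$ is normal in $\Aut(X)$ (conjugating a shear by any of the three types of automorphism is again a shear, by direct computation using that $\Aut(Y)$ acts on $E(Y)$ and $\mathrm{GL}_s(\BZ)$ permutes/combines the $t_i$). The quotient $\Aut(X)/N$ is then generated by the image of $\mathrm{GL}_s(\BZ)$ and the image of $\Aut(Y)$; these intersect trivially (one acts trivially on $Y$, the other trivially on $T_s$, and an automorphism that is both is the identity) and $\mathrm{GL}_s(\BZ)$ is normalized by $\Aut(Y)$ (in fact they commute, since $\Aut(Y)$ acts trivially on the $t_i$ and $\mathrm{GL}_s(\BZ)$ acts trivially on $Y$), giving $\Aut(X)/N \simeq \Aut(Y) \ltimes \mathrm{GL}_s(\BZ)$ — here the action of $\Aut(Y)$ on $\mathrm{GL}_s(\BZ)$ is trivial, so this is really a direct product, but writing it as $\Aut(Y) \ltimes \mathrm{GL}_s(\BZ)$ is harmless. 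Finally $\Aut(Y)$ acts on $N = (E(Y)\times\BK^*)^s$ through its action on $\BK[Y]^*$ coordinate-wise, and $\mathrm{GL}_s(\BZ)$ acts on $N$ by permuting and $\BZ$-combining the $s$ copies (transposed or inverse-transposed, a routine bookkeeping point); assembling these gives the stated isomorphism $\Aut(X) \simeq \Aut(Y) \ltimes (\mathrm{GL}_s(\BZ) \ltimes (E(Y)\times\BK^*)^s)$.

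The main obstacle is the structural claim $C = 0$ and "$D$ comes from $\Aut(Y)$" — i.e., that no automorphism of $X$ can genuinely mix the torus directions with the $Y$-directions beyond a shear. This is exactly the place where the hypothesis that $\Aut(Y)$ is discrete (equivalently, the maximal torus in $\Aut(Y)$ is trivial) is doing real work, via the argument in the proof of Theorem \ref{theorem1}: the sublattice $M(X)$ of the character lattice is intrinsic to $X$, its saturation detects precisely the $T_s$ factor, and minimality/semi-invariance of the generators of $I(X)$ forces the $t_i$ to appear only as overall monomial factors. Once that normal form is in hand, everything else is a finite sequence of explicit conjugation computations that I would carry out but not reproduce here.
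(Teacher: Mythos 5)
Your proposal is correct in outline but reaches the key structural fact by a different mechanism than the paper. The paper's pivot is group-theoretic: $T_s$ is the unique maximal torus of $\Aut(X)$, hence normal, so every automorphism permutes $T_s$-orbits and therefore preserves the invariant ring $\BK[X]^{T_s}=\BK[Y]$; this immediately gives the homomorphism $\Aut(X)\to\Aut(Y)$, and its kernel is then analyzed by a pointwise-plus-continuity argument showing each element is monomial in the $t_i$ with coefficients in $\BK[Y]^*$. You instead work inside the canonical embedding $X\hookrightarrow T_r$ (Proposition \ref{Prop1}), where every automorphism is already monomial, and extract the block-triangular form from the invariance of the sublattice $M(X)$, whose saturation equals the $E(Y)$-summand of $E(X)$ precisely because $\Aut(Y)$ has trivial maximal torus. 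Your route buys the monomial normal form for free (no continuity argument), at the price of checking that the product embedding is canonical, i.e.\ $E(T_s\times Y)\simeq\BZ^s\oplus E(Y)$ (true because invertible elements of $\BK[Y][t_1^{\pm1},\dots,t_s^{\pm1}]$ are units of $\BK[Y]$ times Laurent monomials). One imprecision to fix: what the invariance of $M(X)$ gives is preservation of the saturation of $M(X)$ itself, which records the $Y$-directions of $E(X)$, not of its ``orthogonal complement''; the complementary $\BZ^s$ spanned by $[t_1],\dots,[t_s]$ is emphatically \emph{not} preserved --- that failure is exactly the shear block $(E(Y)\times\BK^*)^s$. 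With that rephrased, the block-triangular form and your inside-out assembly of the two semidirect products go through.
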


\begin{proof}
There is a natural action of $T_s$ on $X$. We see that $\BK[Y]$ is the algebra of invariants of this action. Since $T_s$ is a unique maximal torus in $\Aut(X)$ each automorphism of $T_s \times Y$ preserves $\BK[Y].$ So we obtain a homomorphism 
$$\Phi: \Aut(X) \to \Aut(Y).$$ Let $B$ be the kernel of $\Phi$. The group $\Aut(Y)$ is naturally embedded into $\Aut (T_s \times Y)$ and intersects trivially with $B$. At the same time, $\Aut (Y)$ maps isomorphically to the image of $\Phi$. It implies that
$$\Aut(T_s \times Y) = \Aut(Y) \ltimes B.$$

We denote by $t_1,\ldots, t_s$ coordinate functions on $T_s.$ Then $$\BK[T_s \times Y] \simeq \BK[T_s]\otimes \BK[Y] = \BK[Y][t_1^{\pm 1}, \ldots t_s^{\pm 1}].$$ Let $\phi \in B.$  The algebra $\BK[Y]$ is invariant with respect to $\phi^*$. So for all $t\in T_s$ and $y \in Y$ we have

$$\phi ((t, y)) = (t', y),$$
for some $t'\in T_s.$ Therefore, for each $y\in Y$ the automorphism $\phi$ defines an automorphism $\phi_y : T_s \to T_s.$ Hence, for each $y\in Y$ we have 
$$\phi^*(t_i)(t, y) = t_i(\phi(t, y)) = t_i((\phi_y(t), y)) = f_i(y)t_1^{a_{i1}(y)} \ldots t_s^{a_{is}(y)}$$
for some non-zero constant $f_i(y)$ and a matrix $A(y) = (a_{ij}(y))\in \mathrm{GL}_s (\BZ).$ In reasons of continuity the matrix $A(y)$ is the same for all $y \in Y$ and $f_i : Y \to \BK$ are regular functions on $Y$. Since $f_i(y) \neq 0$ for all $y\in Y$ the functions $f_i$ are invertible. So we have
$$\phi^* (t_i) = f_it_1^{a_{i1}} \ldots t_s^{a_{is}}$$
for some $f_i \in \BK[Y]^*$ and $A\in \mathrm{GL}_s (\BZ).$

Then we have a homomorphism $\overline{\Phi} : B \to \mathrm{GL}_s (\BZ),\ \phi \to A.$ Again, the group $\mathrm{GL}_s(\BZ)$ is naturally embedded into $B$ in the following way. The matrix $(d_{ij}) \in \mathrm{GL}_s(\BZ)$ corresponds to an automorphism
$$(t_1, \ldots, t_s, y) \to (t_1^{d_{11}}\ldots t_{s}^{d_{1s}}, \ldots, t_1^{d_{s1}}\ldots t_{s}^{d_{ss}}, y).$$
The group $\mathrm{GL}_s(\BZ)$ maps isomorphically to $\mathrm{GL}_s (\BZ)$ under~$\overline{\Phi}. $   So
$$B = \mathrm{GL}_s(\BZ) \ltimes \mathrm{Ker}\ \overline{\Phi} .$$
The kernel of $\overline{\Phi}$ consists of automorphisms $\varphi \in \Aut (T_s \times Y)$ which have a form 

$$\varphi(t_1,\ldots, t_s, y) = (f_1(y)t_1,\ldots, f_s(y)t_s, y).$$
for some $f_1, \ldots, f_s \in \BK[Y]^*.$  We see that for all $f_1, \ldots, f_s \in \BK[Y]^*$ this formula defines an automorphism of $T_s \times Y,$ so $\mathrm{\Ker}\ \overline{\Phi} \simeq (\BK[Y]^*)^s \simeq (E(Y)\times \BK^*)^s.$
\end{proof}

\section{The case $\mathrm{rk}\ E(X) = \dim X$}


Let $X$ be a toral variety. Then $\mathrm{rk}\  E(X) \geq \dim X.$ Indeed, suppose that $f_1, \ldots, f_r$ are invertible functions and $[f_1], \ldots, [f_r]$ is a basis in $E(X).$ Then $f_1, \ldots, f_r$ generate $\BK[X].$ So $r \geq \mathrm{tr.deg}\ \BK[X] = \dim X$. 

The following result shows that if $\mathrm{rk}\ E(X) = \dim X$ then $X$ is a torus. Moreover, this is the only case when $\Aut(X)$ acts with an open orbit on $X$.  

\begin{proposition} \label{theorem2}
Let $X$ be a toral variety. Then the following conditions are equivalent.
\begin{enumerate}
    \item $X$ is a torus;
    \item $\mathrm{rk}\ E(X) = \dim  X;$
    \item $\Aut(X)$ acts on $X$ with an open orbit.
\end{enumerate}
\end{proposition}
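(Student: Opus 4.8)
The plan is to establish the cycle $(1)\Rightarrow(2)$, $(2)\Rightarrow(1)$, $(1)\Rightarrow(3)$ and $(3)\Rightarrow(1)$; three of these are immediate and essentially all the content sits in $(3)\Rightarrow(1)$. For $(1)\Rightarrow(2)$: if $X$ is a torus of dimension $m$, then $E(X)$ is its character lattice, of rank $m=\dim X$. For $(1)\Rightarrow(3)$: a torus sits inside its own automorphism group acting by translations, transitively, so $\Aut(X)$ has all of $X$ as an orbit. For $(2)\Rightarrow(1)$: put $r=\mathrm{rk}\,E(X)=\dim X$ and take a canonical embedding $\rho\colon X\hookrightarrow T_r$; then $\rho(X)$ is a closed irreducible subvariety of the irreducible variety $T_r$ of dimension $r=\dim T_r$, hence $\rho(X)=T_r$ and $X$ is a torus.

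For $(3)\Rightarrow(1)$ I would fix a canonical embedding $X\hookrightarrow T_r$ with $r=\mathrm{rk}\,E(X)$ and work with $H(X)$. By Proposition \ref{Prop1} we identify $H(X)$ with the subgroup of $T_r$ preserving $X$, which is closed in $T_r$ by Proposition \ref{Prop3}; this subgroup acts on $X$ by the restriction of the translation action of $T_r$ on itself, and that action is free. Hence every $H(X)$-orbit in $X$ has dimension exactly $\dim H(X)$, which already gives $\dim H(X)\le\dim X$. The crux is the opposite inequality, obtained from the open-orbit hypothesis: by definition $\Aut(X)/H(X)$ is a subgroup of $\Aut(E(X))\cong\GL_r(\ZZ)$, hence countable (cf. Corollary \ref{CorAut}). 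Fixing $x\in X$ whose $\Aut(X)$-orbit $O$ is open and choosing coset representatives $g_i$ of $H(X)$ in $\Aut(X)$, we get $O=\bigcup_i g_i\bigl(H(X)\cdot x\bigr)$, a countable union of locally closed subsets of $X$ each of dimension $\dim H(X)$ (the image under the automorphism $g_i$ of the orbit $H(X)\cdot x$). Since $X$ is irreducible and $O$ is open, $O$ is dense; since $\BK$ is uncountable, $X$ is not a countable union of proper closed subsets, so some $g_i\bigl(H(X)\cdot x\bigr)$ must be dense in $X$, whence $\dim X\le\dim H(X)$. Therefore $\dim H(X)=\dim X$.

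To finish, pick any $x_0\in X$. The orbit $H(X)\cdot x_0$ is the coset $H(X)\,x_0$ inside $T_r$, which is closed (a coset of a closed subgroup) and isomorphic as a variety to $H(X)$ via right translation by $x_0$; it is contained in $X$ because $H(X)$ preserves $X$, and it has dimension $\dim H(X)=\dim X$. A closed subset of the irreducible variety $X$ of dimension $\dim X$ is all of $X$, so $X=H(X)\,x_0\cong H(X)$. As $X$ is irreducible, $H(X)$ is connected, and a connected quasitorus is a torus; hence $X$ is a torus, which is $(1)$.

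The main obstacle is the inequality $\dim X\le\dim H(X)$ in the second paragraph: this is exactly where uncountability of $\BK$ is used, via countability of $\Aut(X)/H(X)$ and the fact that an irreducible variety over an uncountable field is not a countable union of proper closed subvarieties. One should also be careful with the minor points that each piece $g_i\bigl(H(X)\cdot x\bigr)$ is genuinely locally closed, so that passing to closures is legitimate (true because orbits of an algebraic group action are locally closed and automorphisms of $X$ preserve this), and that "open orbit" upgrades to "dense orbit" by irreducibility of $X$.
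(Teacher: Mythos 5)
Your proposal is correct, and for the easy implications it matches the paper: $(1)\Rightarrow(2)$ and $(1)\Rightarrow(3)$ are dismissed the same way, and your dimension count for $(2)\Rightarrow(1)$ (a closed irreducible subvariety of $T_r$ of dimension $r$ must be all of $T_r$) is just a geometric rephrasing of the paper's observation that $f_1,\dots,f_n$ are algebraically independent, so that $\BK[X]$ is a Laurent polynomial algebra. The interesting divergence is in $(3)\Rightarrow(1)$. Both arguments hinge on the same uncountability trick --- the open orbit is a countable union of orbits of a connected-up-to-countable-index subgroup, so one of those orbits is dense --- but you run it with $H(X)$ while the paper runs it with the maximal torus $T\subseteq\Aut(X)$, and the conclusions are drawn differently: the paper deduces $\dim T=\dim X$ and then invokes Theorem \ref{theorem1} to write $X\simeq T\times Y$ with $\dim Y=0$, whereas you deduce $\dim H(X)=\dim X$ and finish intrinsically, observing that the coset $H(X)x_0$ is closed in $T_r$, contained in $X$, and of full dimension, hence equals $X$, so $X\cong H(X)$ is a connected quasitorus, i.e.\ a torus. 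Your route buys independence from Theorem \ref{theorem1} (and from the uniqueness of the maximal torus cited from the rigidity literature), using only Propositions \ref{Prop1} and \ref{Prop3}; it also makes the countability step cleaner, since $\Aut(X)/H(X)$ visibly embeds in $\GL_r(\ZZ)$, whereas the paper's passage from ``$\Aut(X)/T$ is discrete'' to ``countably many $T$-orbits'' implicitly uses that same embedding together with finiteness of $H(X)/T$. The paper's route is shorter on the page because the structural work has already been done in Theorem \ref{theorem1}. Your argument that the free $H(X)$-action forces every orbit to be a closed coset of dimension exactly $\dim H(X)$, giving both inequalities $\dim H(X)\le\dim X$ and (via density of one translate) $\dim X\le\dim H(X)$, is sound.
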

\begin{proof}
The implication $1) \Rightarrow 2)$ is trivial.

Suppose that $\mathrm{rk}\ E(X) = \dim X.$ Then one can choose invertible functions $f_1, \ldots, f_n$ such that $[f_1],\ldots,[f_n]$ is a basis of $E(X)$. Then $\BK[X]$ is generated by 
$$f_1,f_1^{-1},\ldots,f_n,f_n^{-1}.$$ 
But $f_1,\ldots,f_n$ are algebraically independent, otherwise $\dim X<\mathrm{rk}\ E(X)$. So $\BK[X]$ is isomorphic to the algebra of Laurent polynomials. So we obtain implication $2)~\Rightarrow~1).$

The implication $1) \Rightarrow 3)$ is trivial. Suppose $X$ is a toral variety and $\Aut(X)$ acts on $X$ with an open orbit $U$. 

Let $T$ be the maximal torus in $\Aut(X).$ Since the quotient group $\Aut(X)/T$ is a discrete group the set $U$ is a countable union of orbits of $T.$ Since $\BK$ is uncountable it implies that one of the orbits of $T$ is open in $X.$ Then $\dim X = \dim T.$ By Theorem \ref{theorem1} we have $X \simeq T \times Y$ for some toral variety $Y$. But since $\dim T = \dim T \times Y$ we obtain that $Y$ is a point and $X \simeq T.$

\end{proof}

\section{The case $\mathrm{rk}\ E(X) = \dim X + 1$}

By Theorem \ref{theorem1} any toral variety over algebraically closed uncountable field of characteristic zero is a direct product $T \times Y$ where $T$ is a torus and $Y$ is a toral variety with a discrete automorphism group. By Theorem \ref{Gaifullin} one can find $\Aut(X)$ knowing $\Aut(Y).$ In this section we provide a way to find $\Aut(Y)$ when $\mathrm{rk}\ E(Y) = \dim\ Y + 1.$

Let $Y$ be a toral variety with a trivial maximal torus in $\Aut(Y)$. Let $r$ be the rank of $E(Y)$. We suppose that $r = \dim\ Y + 1.$ 

There is a canonical embedding of  $Y$ into the torus $T_{r}$ as a hypersurface. The variety $T_r$ is factorial so there is an irreducible polynomial $h \in \BK [t_1^{\pm 1}, \ldots, t_r^{\pm 1}]$ such that $I(Y) = (h).$ The polynomial $h$ has a form
$$h = \sum_{m\ \in\ \mathrm{Supp}\ h} \alpha_m \chi^m.$$

Let $M$ be the lattice of characters of $T_r$ and $M(Y)$ be a sublattice in $M$ which corresponds to $Y$; see Definition \ref{def1}. Since the maximal torus in $\Aut(Y)$ is trivial, the rank of the lattice $M(Y)$ is equal to $r$. It means that the elements $m_a - m_b$ with $m_a,m_b \in \mathrm{Supp}\ h$ generate a sublattice of full rank in $M$.

We denote by $\mathrm{GAff}(M, h)$ the group of all invertible integer affine transformations $\varphi$ of $M$ which preserve $\mathrm{Supp}\ h$ and for any linear combination 
$$\sum_{m \in \supp\ h}{a_m}m = 0,$$
where $a_m \in \mathbb{Z}$ and $\sum_m a_m = 0$, the affine transformation $\varphi$ satisfies

\begin{equation}\label{eq2}\prod_{m \in \supp\ h} \left(\alpha_{m}\right)^{a_m} = \prod_{m \in \supp h} \left(\alpha_{\varphi(m)}\right)^{a_m}.
\end{equation}

\begin{theorem}\label{Gaff}
Let $Y$ be a toral variety with trivial maximal torus in $\Aut(Y).$ Suppose that $\mathrm{rk}\ E(Y) = \dim Y + 1.$ Then

$$\Aut(Y)/H(Y) \simeq \mathrm{GAff}(M, h).$$
\end{theorem}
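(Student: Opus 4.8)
The plan is to construct a homomorphism $\Aut(Y) \to \mathrm{GAff}(M,h)$ and show it is surjective with kernel exactly $H(Y)$. By Proposition~\ref{Prop1}, every automorphism of $Y$ extends uniquely to an automorphism $\varphi$ of $T_r$, and such a $\varphi$ acts on the character lattice $M$ as an element of $\GL_r(\BZ)$ composed with a translation coming from the constants $\lambda_i \in \BK^*$; concretely, if $\varphi^*(t_i) = \lambda_i t_1^{a_{i1}}\cdots t_r^{a_{ir}}$, then $\varphi^*(\chi^m) = \lambda^{m}\chi^{Am}$ where $A = (a_{ij})$ and $\lambda^m = \prod \lambda_i^{m_i}$. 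So $\varphi^*$ acts on Laurent polynomials by $\chi^m \mapsto \lambda^m \chi^{Am}$, which is a linear map on $M$ but scales each character by a monomial in the $\lambda_i$. Since $\varphi$ preserves $Y$, it preserves the ideal $I(Y) = (h)$, and since $h$ is irreducible and $\BK[T_r]^* = \BK^* \cdot \{\chi^m\}$, we get $\varphi^*(h) = c\,\chi^{m_0} h$ for some $c \in \BK^*$ and $m_0 \in M$. Comparing supports, $A$ maps $\supp h$ to $m_0 + \supp h$, i.e. the affine map $m \mapsto Am + m_0$ preserves $\supp h$; call this affine transformation $\Psi(\varphi)$. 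It is invertible because $\varphi$ is, and because $\supp h$ affinely spans a full-rank sublattice (given $\mathrm{rk}\,E(Y) = \dim Y + 1$ and the triviality of the maximal torus, which forces $M(Y)$ to have rank $r$), the affine map $\Psi(\varphi)$ is determined by its action on $\supp h$ and lies in the integer affine group of $M$.

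Next I would check that $\Psi(\varphi)$ satisfies the defining relation \eqref{eq2} of $\mathrm{GAff}(M,h)$. Comparing coefficients in $\varphi^*(h) = c\,\chi^{m_0}h$ gives $\alpha_{\varphi(m)}\cdot(\text{something}) = c\,\lambda^{m}\,\alpha_m$ — more precisely, writing $\varphi^*(h) = \sum_m \alpha_m \lambda^m \chi^{Am}$ and matching with $c\sum_m \alpha_m \chi^{m_0 + m}$, after reindexing we get $\alpha_{\Psi^{-1}(m')} \lambda^{\Psi^{-1}(m') \text{ linear part}} = c\,\alpha_{m'}$ for every $m' \in \supp h$. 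For any integer relation $\sum_m a_m m = 0$ with $\sum_m a_m = 0$, raising the coefficient identity to the $a_m$ and multiplying over $m$, the contributions of $c$ (total exponent $\sum a_m = 0$) and of the $\lambda$-monomial (total exponent a $\BZ$-linear function applied to $\sum a_m m = 0$, hence $0$) both cancel, leaving exactly \eqref{eq2}. Thus $\Psi$ lands in $\mathrm{GAff}(M,h)$, and it is a homomorphism because composition of automorphisms of $T_r$ corresponds to composition of the affine maps on $M$ (the constants multiply correctly). The kernel consists of those $\varphi$ acting trivially on $M$ — i.e. with $A = \mathrm{id}$ and $m_0 = 0$, so $\varphi \in T_r$ — which by Proposition~\ref{Prop1}(2) is precisely $H(Y)$.

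The remaining and main point is surjectivity: given $\psi \in \mathrm{GAff}(M,h)$ with linear part $A \in \GL_r(\BZ)$ and translation $m_0$, I must produce constants $\lambda_1, \ldots, \lambda_r \in \BK^*$ so that the automorphism $\varphi^*(t_i) = \lambda_i t_1^{a_{i1}}\cdots t_r^{a_{ir}}$ preserves $(h)$, equivalently so that $\varphi^*(h)$ is a scalar times $\chi^{m_0} h$. Writing $\varphi^*(h) = \sum_m \alpha_m \lambda^m \chi^{Am}$ and demanding this equal $c\,\chi^{m_0} h = c\sum_m \alpha_m \chi^{m_0+m}$, and using that $\psi$ permutes $\supp h$ via $m \mapsto Am + m_0$, the requirement becomes: the ratios $\lambda^{m}\alpha_m / \alpha_{\psi(m)}$ are independent of $m \in \supp h$. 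Fix a base point $m_* \in \supp h$; then I need $\lambda^{m - m_*} = (\alpha_{\psi(m)}\alpha_{m_*})/(\alpha_{\psi(m_*)}\alpha_m)$ for all $m \in \supp h$. The differences $m - m_*$ generate the full-rank lattice $M(Y)$, which has finite index in $M$; so this is a system of multiplicative equations in $\lambda_1, \ldots, \lambda_r$. It is consistent precisely because any $\BZ$-linear relation among the $m - m_*$ — which, since $\sum_m a_m = 0$ makes $\sum a_m(m - m_*) = \sum a_m m$, corresponds to a relation $\sum a_m m = 0$, $\sum a_m = 0$ — forces the same relation on the right-hand sides, and that is exactly condition \eqref{eq2} for $\psi$. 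Over an algebraically closed field every consistent such system of monomial equations has a solution (the image of the torus map $T_r \to T_{r}$ dual to $M(Y) \hookrightarrow M$ is cut out by exactly the characters vanishing on $M(Y)$, and the target point lies on this subtorus by \eqref{eq2}). Choosing such $\lambda_i$ gives an automorphism $\varphi$ of $T_r$ preserving $(h)$, hence by Proposition~\ref{Prop1} an automorphism of $Y$, with $\Psi(\varphi) = \psi$. I expect the bookkeeping in this last step — tracking the exponent cancellations and correctly identifying the consistency condition with \eqref{eq2} — to be the principal obstacle; everything else is a direct application of Proposition~\ref{Prop1} and the irreducibility of $h$.
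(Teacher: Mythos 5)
Your proposal is correct and follows essentially the same route as the paper: extend automorphisms to $T_r$ via Proposition \ref{Prop1}, use irreducibility of $h$ to get $\psi^*(h) = c\chi^v h$, read off an affine map on $\supp h$, verify condition (\ref{eq2}) by the same exponent-cancellation, and identify the kernel with $H(Y)$. Your surjectivity step — solving the monomial system $\lambda^{m-m_*} = \alpha_{\psi(m)}\alpha_{m_*}/(\alpha_{\psi(m_*)}\alpha_m)$ on the full-rank difference lattice, with consistency given by (\ref{eq2}) and solvability by divisibility of $\BK^*$ — is exactly the content of the paper's explicit choice of $\chi^{f_j}(\lambda)$, just packaged more transparently.
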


\begin{proof}

Let $\psi$ be an automorphism of $Y$. By Proposition \ref{Prop1} the automorphism $\psi$ can be extended to an automorphism of $T_r$. We denote by $\psi^*$ the respective automorphism of $\BK[t^{\pm 1}_1,\ldots, t^{\pm 1}_r]$. Then $\psi^*$ has the form
$$\psi^*(t_i) = \lambda_i t_1^{a_{i1}}\ldots t_r^{a_{ir}},$$
where $\lambda_i \in \BK^*$ and $(a_{ij}) \in \mathrm{GL}_r (\BZ).$ We denote by $\lambda$ the element $(\lambda_1, \ldots, \lambda_r) \in T_r$ and by $\overline{\psi}$ the automorphism of $M$ which corresponds to the matrix $(a_{ij}).$ Then 
$$\psi^* (\chi^m) = \chi^m(\lambda)\chi^{\overline{\psi}(m)}$$
for all $m \in M.$

The polynomial $\psi^* (h)$ also generates $I(Y).$ So it differs from $h$ by an invertible element of $\BK[t^{\pm 1}_1,\ldots, t^{\pm 1}_r].$ Then 
$$\psi^*(h) = \alpha\chi^vh$$
for some  
$\alpha \in \BK^*$ and $v \in M.$ Therefore, we have an equation

\begin{equation}\label{eq1}\psi^*(h) = \sum_{m \in \mathrm{Supp}\ h} \alpha_m\chi^m(\lambda)\chi^{\overline{\psi}(m)} = \alpha\sum_{m \in \mathrm{Supp}\ h} \alpha_m \chi^{m + v}.\end{equation}

It implies that $\overline{\psi}(m) - v$ belongs to $\mathrm{Supp}\ h$ for all $m \in \mathrm{Supp}\ h$. We define the map $\varphi: M \to M$ by the following formula:
$$\varphi(m) = \overline{\psi} (m) - v.$$
Then $\varphi$ is an affine transformation of $M$ which preserves $\mathrm{Supp}\ h.$ 

We will prove that $\varphi$ belongs to $\mathrm{GAff}(M,h)$. So we consider a linear combination as in (\ref{eq2}): 
$$\sum_{m \in \supp\ h}a_mm = 0,$$
where $a_m \in \mathbb{Z}$ and $\sum_m a_m = 0.$

Equation (\ref{eq1}) can be written as

$$\sum_{m \in \mathrm{Supp}\ h} \alpha_m\chi^m(\lambda)\chi^{\varphi(m)} = \alpha\sum_{m \in \mathrm{Supp}\ h} \alpha_m \chi^{m} = \alpha\sum_{m \in \mathrm{Supp}\ h} \alpha_{\varphi(m)} \chi^{\varphi(m)}.$$

and it implies

$$\frac{\alpha_{m_{1}}\chi^{m_{1}}(\lambda)}{\alpha_{m_{2}}\chi^{m_{2}}(\lambda)} = \frac{\alpha_{m_{1}}}{\alpha_{m_{2}}} \chi^{m_{1} - m_{2}}(\lambda) = \frac{\alpha_{\varphi(m_{1})}}{\alpha_{\varphi(m_{2})}}$$
for all $m_1, m_2 \in \supp\ h.$

We fix some $m_0 \in \supp\ h$. Then we have

$$\prod_{m \in \supp\ h} \left(\alpha_{\varphi(m)} \right)^{a_m} = \prod_{m \in \supp\ h} \left(\frac{\alpha_{\varphi(m)}}{\alpha_{\varphi(m_0)}} \right)^{a_m} = \prod_{m \in \supp\ h} \left(\frac{\alpha_{m}}{\alpha_{m_0}} \chi^{m-m_0}(\lambda) \right)^{a_m} = $$

$$ = \prod_{m \in \supp\ h} \left(\frac{\alpha_{m}}{\alpha_{m_0}} \right)^{a_m}( \chi^{\sum_ma_m(m-m_0)}(\lambda)) = \prod_{m \in \supp\ h} \left(\frac{\alpha_{m}}{\alpha_{m_0}} \right)^{a_m} = \prod_{m \in \supp\ h} \left(\alpha_{m} \right)^{a_m}$$

So $\varphi \in \mathrm{GAff} (M, h).$ Then we obtain a homomorphism 
$$\eta: \Aut(Y) \to \mathrm{GAff} (M, h),\ \psi \to \varphi.$$
Moreover we see that the kernel of $\eta$ is $H(Y).$ Now we will show that $\eta$ is surjective. 

Let $\varphi \in \mathrm{GAff} (M, h)$ and $f_1, \ldots, f_r$ be a basis in $M(Y).$ Again we fix some $m_0 \in \supp h$. Then there are  $a_{m,j} \in \BZ$ for $m \in \supp h$ such that
$$f_j = \sum_{m \in \supp\ h} a_{m,j}(m - m_{0}).$$
There is a $\lambda \in T_r$ such that
$$\chi^{f_j} (\lambda) = \prod_{m \in \supp\ h}\left(\frac{\alpha_{m}}{\alpha_{m_{0}}}\right)^{-a_{m,j}}\prod_{m \in \supp\ h}\left(\frac{\alpha_{\varphi(m)}}{\alpha_{\varphi(m_{0})}}\right)^{a_{m,j}}$$
for all $j = 1,\ldots, r.$ 

Let $d\varphi$ be the linear part of $\varphi,$ i.e. $d\varphi(m) = \varphi(m) - \varphi(0).$ We define an automorphism $\psi^*$ of $\BK[t_1^{\pm 1}\ldots t_r^{\pm 1}]$ by the following rule:
$$\psi^*(\chi^m) = \chi^m(\lambda)\chi^{d\varphi(m)}.$$
Let us check that $\psi^*$ preserves $I(Y).$ We have
$$\psi^*(h) = \sum_{m \in \supp\ h} \alpha_m\chi^m(\lambda)\chi^{d\varphi(m)}.$$
We denote $\varphi(0)$ by $v$. Then 
$$\varphi(m) = d\varphi(m) + v$$
and 
$$\chi^v\psi^*(h) = \sum_{m \in \supp\ h} \alpha_m\chi^m(\lambda)\chi^{\varphi(m)}.$$
We see that $\mathrm{Supp}\ \chi^v\psi^*(h) = \mathrm{Supp}\ h.$ We will show that there is $\alpha \in \BK$ such that
$$\chi^v\psi^*(h)  = \alpha h.$$

For any $b, c \in \mathrm{Supp}\ h$ there are numbers $d_j \in \BZ$ such that
$$b-c = \sum_{j = 1}^r d_jf_j.$$
So 
$$ \frac{\alpha_b\chi^b(\lambda)}{\alpha_c\chi^c(\lambda)} = \frac{\alpha_c}{\alpha_b}\chi^{b-c}(\lambda)=\frac{\alpha_b}{\alpha_c}\chi^{\sum_j d_jf_j}(\lambda)=$$
\begin{equation}\label{eqa}= \frac{\alpha_b}{\alpha_c}(\prod_{j = 1}^r\chi^{f_j}(\lambda))^{d_j}=\frac{\alpha_b}{\alpha_c}\prod_{m ,j = 1}\left( \frac{\alpha_{m}}{\alpha_{m_{0}}}\right)^{-d_ja_{ m,j}}\prod_{m,j}\left( \frac{\alpha_{\varphi(m)}}{\alpha_{\varphi(m_{0})}}\right)^{d_ja_{m,j}}.\end{equation}

We have a combination 
$$0 = b - c - \sum_j d_jf_j = b - c - \sum_{m, j} d_ja_{m,j}(m - m_{0}) = b - c - \sum_{m,j}d_ja_{m,j }m + (\sum_{m, j}d_ja_{m, j})m_0.$$
The sum of all coefficients in the last sum is equal to $0$. Since $\varphi \in \mathrm{Gaff}(M, h)$ we obtain

\begin{equation}\label{eqb}\frac{\alpha_b}{\alpha_c}\prod_{m,j}\left(\frac{\alpha_{m}}{\alpha_{m_{0}}}\right)^{-d_ja_{m,j}}= \frac{\alpha_{\varphi(b)}}{\alpha_{\varphi(c)}}\prod_{m ,j}\left(\frac{\alpha_{\varphi(m)}}{\alpha_{\varphi(m_{0})}}\right)^{-d_ja_{m,j}}.\end{equation}
It follows from equations \ref{eqa} and \ref{eqb} that
$$\frac{\alpha_b\chi^{b}(\lambda)}{\alpha_c\chi^c(\lambda)} = \frac{\alpha_{\varphi(b)}}{\alpha_{\varphi(c)}}.$$

So the coefficients of the polynomials $\chi^v\psi^*(h)$ and $h$ are proportional. Then there is an $\alpha \in \BK$ such that $\chi^v\psi^*(h) =\alpha h.$ Hence $\psi^*(h) = \alpha \chi^{-v}h \in I(Y)$. Therefore, $\psi^*$ preserves $I(Y)$ and defines an automorphism $\psi$. It is a direct check that $\eta(\psi) = \varphi$. So $\eta$ is surjective. 

\end{proof}

\begin{corollary}
Let $Y$ be a toral variety with trivial maximal torus in $\Aut(Y).$ Suppose that $\mathrm{\rk}\ E(Y) = \dim Y + 1.$ Then $\Aut(Y)$ is a finite group.
\end{corollary}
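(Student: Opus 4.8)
The plan is to combine Theorem \ref{Gaff}, which yields an exact sequence
$$1 \longrightarrow H(Y) \longrightarrow \Aut(Y) \longrightarrow \mathrm{GAff}(M,h) \longrightarrow 1,$$
with the observation that both the kernel $H(Y)$ and the quotient $\mathrm{GAff}(M,h)$ are finite; an extension of a finite group by a finite group is finite, so $\Aut(Y)$ is finite.

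First I would show $H(Y)$ is finite. By Propositions \ref{Prop1} and \ref{Prop3}, $H(Y)$ is identified with the subgroup of $T_r$ cut out by $\chi^m(t)=1$ for all $m\in M(Y)$. As in the proof of Theorem \ref{theorem1}, choosing a basis adapted to $M(Y)\subseteq M$ writes this subgroup as $\mu_{b_1}\times\dots\times\mu_{b_l}\times T_{r-l}$, whose maximal torus $T_{r-l}$ coincides with the maximal torus of $\Aut(Y)$ because $\Aut(Y)/H(Y)$ is discrete. Since the maximal torus of $\Aut(Y)$ is trivial by hypothesis, we get $l=r$ — this is precisely the statement, already recorded just before Theorem \ref{Gaff}, that $M(Y)$ has full rank $r$. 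Hence $H(Y)=\mu_{b_1}\times\dots\times\mu_{b_r}$ is a finite group of order $b_1\cdots b_r$.

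Next I would show $\mathrm{GAff}(M,h)$ is finite. Write $S:=\supp h$, a finite set. An element of $\mathrm{GAff}(M,h)$ is in particular an invertible affine transformation of $M$ permuting $S$, so restriction gives a map $\mathrm{GAff}(M,h)\to\mathrm{Sym}(S)$. This map is injective: if an affine transformation $\varphi$ fixes $S$ pointwise, then its linear part fixes every difference $m_a-m_b$ with $m_a,m_b\in S$, and since these differences generate a sublattice of full rank in $M$ (again noted before Theorem \ref{Gaff}), the linear part is the identity; the translation part is then also trivial because $\varphi$ fixes any point of $S$. Therefore $\mathrm{GAff}(M,h)$ embeds into the finite group $\mathrm{Sym}(S)$ and is finite.

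Combining the two finiteness statements with the exact sequence above gives $|\Aut(Y)|=|H(Y)|\cdot|\mathrm{GAff}(M,h)|<\infty$, which proves the corollary. I do not expect a genuine obstacle here; the only point requiring a line of justification is the elementary fact that an affine transformation permuting a finite set whose difference set spans $M\otimes\BQ$ is determined by its restriction to that set.
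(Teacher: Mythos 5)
Your proposal is correct and follows essentially the same route as the paper: $H(Y)$ is finite because $M(Y)$ has full rank (so the quasitorus cut out by $M(Y)$ has trivial identity component), and $\mathrm{GAff}(M,h)$ is finite because an affine transformation preserving the finite set $\supp h$, whose differences span a full-rank sublattice, is determined by the induced permutation of that set. The only difference is that you spell out the injection into $\mathrm{Sym}(\supp h)$ explicitly, which the paper leaves implicit.
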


\begin{proof}
Indeed, the group $H(Y)$ is finite in this case. As we mentioned before the sublattice $M(Y)$ is of full rank and generated by the finite set $\supp\ h + (- \supp\ h).$ Then any affine transformation of $M$ is uniquely defined by the image of the set $\supp\ h +( - \supp\ h).$ Therefore, the group $\mathrm{GAff}(M, h)$ is finite. Then the group $\Aut(Y)$ is also finite.

\end{proof}

It is natural to formulate the following question.

\begin{conjecture}
    
Let $Y$ be a toral variety with trivial maximal torus in $\Aut(Y).$ Is $\Aut(Y)$ a finite group?
\end{conjecture}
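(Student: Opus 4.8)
The statement is a question, and the plan is to argue that the answer is affirmative: $\Aut(Y)$ is finite whenever the maximal torus in $\Aut(Y)$ is trivial. The reduction is immediate. Since the maximal torus is trivial, the quasitorus $H(Y)$ has trivial maximal torus and is therefore a finite group, so it suffices to prove that $\Aut(Y)/H(Y)$ is finite. By Corollary \ref{CorAut} this quotient embeds into $\GL_r(\BZ) = \GL(M)$, where $M$ is the character lattice of a canonical embedding $Y \hookrightarrow T_r$: each $\psi \in \Aut(Y)$ extends (Proposition \ref{Prop1}) to $\psi^*(\chi^m) = \chi^m(\lambda)\chi^{\bar\psi(m)}$ with $\bar\psi \in \GL(M)$, and the map $[\psi] \mapsto \bar\psi$ is a well-defined injection $\Aut(Y)/H(Y) \hookrightarrow \GL(M)$ with image a group $G$ that I must show is finite.

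The key idea is to realize $G$ as a symmetry group of one finite combinatorial object attached to $Y$, namely its tropicalization $\mathrm{Trop}(Y) \subseteq N_{\BR}$, where $N = \homo(M,\BZ)$. Over the trivially valued field $\BK$ the structure theorem of tropical geometry makes $\mathrm{Trop}(Y)$ a rational polyhedral \emph{fan} of pure dimension $\dim Y$ with finitely many cones. The extension of $\psi$ is the composition of the monomial map dual to $\bar\psi$ with the torus translation by $\lambda$, and it carries $Y$ onto $Y$. Since tropicalization is invariant under torus translation and functorial under monomial maps, the transpose $\bar\psi^{\,t} \in \GL(N)$ must preserve the fan $\mathrm{Trop}(Y)$. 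This yields an injection of $G$ into the group $\mathrm{Lin}(\mathrm{Trop}(Y))$ of lattice automorphisms of $N$ preserving $\mathrm{Trop}(Y)$, so it remains to prove that this last group is finite.

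Here both hypotheses enter. Because the embedding is canonical, no nonzero character restricts to a constant on $Y$, so $Y$ lies in no proper subtorus coset and $\mathrm{Trop}(Y)$ spans $N_{\BR}$. Because the maximal torus in $\Aut(Y)$ is trivial, the stabilizer $\{t \in T_r : tY = Y\} = H(Y)$ (the identification is part of Proposition \ref{Prop1}) is finite, so the lineality space of $\mathrm{Trop}(Y)$, which is the cocharacter space of this stabilizer subtorus, is zero. Thus the fan is \emph{pointed} and its finitely many primitive ray generators span $N_{\BR}$. Any $\sigma \in \mathrm{Lin}(\mathrm{Trop}(Y))$ permutes the rays, sends primitive generator to primitive generator, and—being linear and determined by its values on a spanning set—is fixed by this permutation. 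Hence $\mathrm{Lin}(\mathrm{Trop}(Y))$ embeds into a finite symmetric group, so $G$, and therefore $\Aut(Y)$, is finite.

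The main obstacle is exactly what forces the passage to $\mathrm{Trop}(Y)$: the ideal $I(Y)$ may contain infinitely many minimal polynomials, so the naive strategy—an automorphism permutes the supports of minimal polynomials via $\bar\psi$—does not by itself bound $G$. Tropicalization repackages all of this support data into a single fan with finitely many cones, and the two technical points needing care are the precise functoriality of tropicalization under the extended monomial map (to deduce that $\bar\psi^{\,t}$ preserves the fan) and the identification of the lineality space of $\mathrm{Trop}(Y)$ with the stabilizer subtorus (to convert ``trivial maximal torus'' into ``pointed fan''). As a consistency check, this argument specializes to the case $\rk E(Y) = \dim Y + 1$ of Theorem \ref{Gaff}: there $\mathrm{Trop}(Y)$ is the tropical hypersurface of $h$, the fan normal to the Newton polytope of $h$, and $\mathrm{Lin}(\mathrm{Trop}(Y))$ recovers precisely the linear parts occurring in $\mathrm{GAff}(M,h)$, whose finiteness was already established.
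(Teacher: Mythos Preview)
The paper does not prove this statement; it is posed as an open question (Conjecture~2) immediately after the corollary to Theorem~\ref{Gaff}, and no argument is offered. So there is no ``paper's proof'' to compare against---your proposal, if correct, would \emph{resolve} the conjecture rather than reproduce an existing argument.

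Your tropical argument appears to be essentially correct, but one step deserves care. You define $\mathrm{Lin}(\mathrm{Trop}(Y))$ as the group of lattice automorphisms preserving $\mathrm{Trop}(Y)$ as a \emph{set} and then assert that any such automorphism ``permutes the rays''; but rays depend on a choice of fan structure on that set, and this structure is not unique. The clean fix is to note that each $\bar\psi^{\,t}$ in the image $G$ of $\Aut(Y)/H(Y)$ preserves not merely the support but the Gr\"obner fan structure itself: since $\psi^*(I(Y))=I(Y)$ one checks $\mathrm{in}_w(I(Y)) = \psi^*\!\bigl(\mathrm{in}_{\bar\psi^{\,t}(w)}(I(Y))\bigr)$, so $\bar\psi^{\,t}$ permutes Gr\"obner cones and in particular the finitely many Gr\"obner rays. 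Your identification of the lineality space with the cocharacter space of the stabiliser torus is correct, so this fan is pointed. The spanning claim also holds: if $\mathrm{Trop}(Y)\subseteq m^\perp$ for some nonzero $m\in M$, then by Tevelev's properness criterion the closure $\bar Y$ in any toric variety whose fan has support $\mathrm{Trop}(Y)$ is complete, while $\chi^m$ extends to a global unit on that toric variety and hence restricts to a constant on the complete irreducible $\bar Y$, contradicting canonicity of the embedding. With these points in place the primitive Gr\"obner ray generators form a finite spanning subset of $N$ permuted by $G$, and finiteness of $G$---hence of $\Aut(Y)$---follows. Your consistency check against the hypersurface case of Theorem~\ref{Gaff} is accurate.
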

Note that this is not true for a general rigid variety. One can find a counterexample in~\cite{Perep1}.

At the end, we give three examples illustrating Theorem \ref{Gaff}.

\begin{example}
Let $Y$ be the affine line $\mathbb{A}^1$ without two points. Then $Y$ is isomorphic to an open set of the torus $\BK^*$:
$$Y = \{ t \in \BK^* | t \neq 1\} \subseteq \BK^*.$$
Hence, $Y$ can be given in $(\BK^*)^2$ as the set of solutions of the equation
$$h = t_1(t_2-1) - 1 = 0,\ (t_1,t_2) \in (\BK^*)^2.$$
So $Y$ is a toral variety. We have 
$$\BK[Y] = \BK[t_1^{\pm 1}, t_2^{\pm 1}]/(t_1(t_2 - 1) - 1) \simeq \BK[t_2^{\pm 1}]_{t_2 - 1},$$
where $\BK[t_2^{\pm 1}]_{t_2 - 1}$ denotes the localization of $\BK[t_2^{\pm 1}]$ at $t_2 - 1$. Hence, all invertible elements of $\BK[Y]$ have the form $\lambda (t_2-1)^at_2^b = \lambda t_1^at_2^b$ where $\lambda \in \BK^*.$ Therefore, $[t_1],[t_2]$ is a basis of $E(Y)$. So the rank of  $E(Y)$ is equal to $\mathrm{dim}\ Y + 1$ and the embedding $ Y \hookrightarrow (\BK^*)^2$ as a set of zeros 
$$h = t_1(t_2-1) - 1 = t_1t_2 - t_1 - 1 =  0,\ (t_1,t_2) \in (\BK^*)^2$$
is a canonical embedding. We can apply Theorem~\ref{Gaff} to find $\Aut(Y)$.

Let $M \simeq \BZ^2$ be the lattice of characters of $(\BK^*)^2$. The set $\mathrm{Supp}\ h$ consists of points $m_0 = (0, 0),\ m_1 = (1, 0), m_2 = (1, 1);$ see Figure \ref{fig1}.

\begin{figure}
\centering
\includegraphics[width=0.5\linewidth]{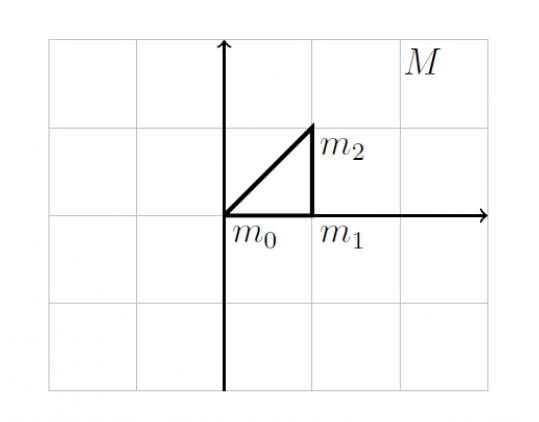}
\caption{\label{fig1}$\mathrm{Supp}\ (t_1t_2 - t_1 - 1)$}

\end{figure}

We see that the lattice $M(Y)$ contains elements $(1, 0), (0, 1),$ so $M(Y) = M.$  Therefore, $H(Y)$ is a trivial group.

A linear combination
$$a_0m_0 + a_1m_1 + a_2m_2 = (a_1 + a_2, a_2)$$
with $a_0 + a_1 + a_2 = 0$ is equal to zero if and only if $a_0 = a_1 = a_2 = 0.$ But then equations (\ref{eq2}) are trivial. By affine transformations of $M$ we can permute all points in $\mathrm{Supp}\ h.$ Therefore, 
$$\Aut(Y) \simeq \mathrm{GAff}(M, h) \simeq S_3.$$

The answer looks natural since the affine line without two points is the projective line without three points. 

In this case $\Aut(Y)$ is generated by the automorphisms $\psi_1, \psi_2$, where
$$\psi_1((t_1, t_2)) = (-t_1t_2, t_2^{-1}), \ \psi((t_1, t_2)) = (-t_2, t_1^{-1}t_2^{-1}).$$
\end{example}

\begin{example}

Now let $Y$ be the set of solutions of the equation 
$$Y = \{ (t_1, t_2, t_3) \in (\BK^*)^3 | \ h = t_3(t_1^2 + t_2^2 - 1) - 1 = 0 \} \subseteq (\BK^*)^3.$$
Then $Y$ is a toral variety and 
$$\BK[Y] = \BK[t_1^{\pm 1}, t_2^{\pm 1}, t_3^{\pm 1}/(t_3(t_1^2 + t_2^2 - 1) - 1) = \BK[t_1^{\pm_1}, t_2^{\pm 1}]_{t_1^2 + t_2^2 - 1}.$$
Therefore, $[t_1], [t_2], [t_3]$ is a basis of $E(Y)$ and the embedding of $Y$ in $(\BK^*)^3$ is a canonical embedding. 

We have $h = t_3(t_1^2 + t_2^2 - 1) - 1 = t_1^2t_3 + t_2^2t_3 - t_3 - 1$ and 
$$\supp h = \{ m_0 = (0, 0, 0), m_1 = (0, 0, 1), m_2 = (2, 0, 1), m_3 = (0, 2, 1) \} \subseteq M \simeq \BZ^3. $$
The vectors $(2, 0, 0), (0, 2, 0)$ and $(0, 0, 1)$ form a basis of $M(Y)$. Then the group $H(Y) \subseteq (\BK^*)^3$ consists of elements 
$$H(Y) = \{(\pm 1, \pm 1, 1) \in (\BK^*)^3 \} \simeq \BZ/2\BZ \times \BZ/2\BZ .$$

The group of invertible affine transformations of $M$ which preserve $\supp h$ is isomorphic to $S_3$ and permutes points $m_1, m_2, m_3$ preserving $m_0$. The sum

$$a_0m_0 + a_1m_1 + a_2m_2 + a_3m_3 = (2a_2, 2a_3, a_1 + a_2 + a_3)$$
with $a_0 + a_1 + a_2 + a_3 = 0$ is equal to zero if and only if $a_0 = a_1 = a_2 = a_3 = 0$. So the equations (\ref{eq2}) are trivial and $\mathrm{GAff}(M, h) \simeq \Aut(Y)/H(Y) \simeq S_3.$ 

The group $\Aut(Y)$ is generated by $H(Y)$ and the automorphisms $\psi_1$ and $\psi_2$ which are defined by the formulas:

$$\psi_1 ((t_1, t_2, t_3)) = (t_2, t_1, t_3), \ \psi_2((t_1, t_2, t_3)) = (-t_2^{-1}, it_1t_2^{-1}, -t_2^2t_3).$$

One can check that $\psi_1$ and $\psi_2$ generate the subgroup in $\Aut(Y)$ which is isomorphic to $S_3$ and trivially intersects with $H(Y)$. So

$$\Aut(Y) \simeq H(Y) \rtimes S_3.$$

The automorphism $\psi_2$ do not commute with the element $(1, -1, 1) \in H(Y)$. Therefore, $\Aut(Y)$ is not a direct product of $H(Y)$ and $S_3$. 
\end{example}

\begin{remark}
It is natural to ask, is it true that, under the conditions of Theorem \ref{Gaff}, we have $\Aut(Y) \simeq H(Y) \rtimes \mathrm{Gaff}(M, h)?$ The authors do not know the answer to this question.
\end{remark}
\section*{Acknowledgement}

The authors are grateful to Segrey Gaifullin for useful discussions. Also, we would like to thank Ivan Arzhantsev for helpful remarks and comments.

\end{document}